\documentclass[12pt]{amsart}
\usepackage[latin9]{inputenc}
\usepackage{amstext}
\usepackage{amsthm}
\usepackage{amssymb}
\usepackage[unicode=true,
 bookmarks=false,
 breaklinks=false,pdfborder={0 0 1},backref=section,colorlinks=false]{hyperref}
\usepackage{verbatim}
\usepackage{amstext}
\usepackage{amsthm}
\usepackage{eurosym}
\usepackage{latexsym}
\usepackage{amsthm}
\usepackage{amsfonts}
\usepackage{ulem}

\makeatletter
\numberwithin{equation}{section}
\numberwithin{figure}{section}
\numberwithin{equation}{section}
\numberwithin{figure}{section}
\numberwithin{equation}{section}
\newtheorem{theorem}{Theorem}

\newtheorem{corollary}[theorem]{Corollary}
\newtheorem{proposition}[theorem]{Proposition}

\theoremstyle{definition}

\newtheorem{example}[theorem]{Example}
\newtheorem*{acknowledgements*}{Acknowledgements}
\theoremstyle{remark}

\numberwithin{theorem}{section}   
\subjclass[2010]{Primary 46B42; Secondary 46A45, 46B20}
\keywords{Banach lattice, lower, upper estimate, symmetric sequence space, upper, lower semi-homogeneous basis, Lorentz space, Orlicz space}
\makeatother

\begin{document}
\title[Optimal upper and lower sequence spaces]{Optimal upper and lower sequence spaces with applications}
\author{Sergey V. Astashkin}
\address{Astashkin: Department of Mathematics, Samara National Research
University, Moskovskoye shosse 34, 443086, Samara, Russian Federation;
Department of Mathematics, Bahcesehir University, 34353, Istanbul, Turkey}
\email{astash56@mail.ru}
\author{Per G. Nilsson}
\address{Nilsson: Stockholm, Sweden}
\email{pgn@plntx.com}
\date{\today }

\begin{abstract}
We study the optimal upper $X_U$ and lower $X_L$ sequence spaces that can be assigned to each Banach lattice $X$. These spaces are symmetric,  have the Fatou property and the unit vector basis has in these spaces very special properties. Determined by the order structure of $X$ the spaces $X_U$ and $X_L$ turn out to be very useful when studying Banach lattices. Among other results, in terms of these constructions, we identify Banach lattices that satisfy equal-norm upper and lower $p$-estimates, give a characterization of $L_p(\mu)$-spaces, derive some properties of the tensor product operator in Lorentz and  Orlicz spaces, identify Orlicz spaces in which the unit vector basis is upper semi-homogeneous.
\end{abstract}

\maketitle

\section{Introduction}

\label{Intro}

Recently, in the paper \cite{AN1},  the authors have initiated the study of the  optimal upper/lower sequence spaces  assigned to Banach lattices. The introduction of these concepts was motivated primarily by the study of connections between upper/lower estimates for Banach lattices and the notion of $s$-decomposibility (see \cite{AN1} for the definition), which has roots in the interpolation theory of operators. The use of the spaces $X_U$ and $X_L$ constructed by a Banach lattice $X$ made it possible to obtain in \cite{AN1} a characterization of relatively $s$-decomposable Banach lattices. As an application of the latter result, it was obtained in particular an orbital factorization
of relative $K$-functional estimates for Banach couples $\vec{X}%
=(X_{0},X_{1})$ and $\vec{Y}=(Y_{0},Y_{1})$ through some suitable couples of weighted $L_{{p}}$-spaces provided if $X_{i}$ and $Y_{i}$ are relatively $s$%
-decomposable for $i=0,1$. 

The first aim of this paper is a continuation of the study of properties of the optimal upper/lower sequence spaces $X_U$ and $X_L$ which was began in \cite{AN1}. We show that these spaces are symmetric for each Banach lattice $X$, have the Fatou property and satisfy some duality type embeddings. 

The second line of our research is connected with some applications  of the above constructions to the study of Banach lattices. First, in terms of embeddings of the spaces $X_{U}$ and $X_{L}$,  we identify Banach lattices that satisfy {equal-norm} upper and lower $p$-estimates (see Proposition{\ref{Tsir4}) and give a characterization of $L_p(\mu)$-spaces (see Theorem \ref{Th-equal-optimal-space-class}). Also, in Theorem  \ref{add cor1}, we prove that a symmetric sequence space $E$ coincides with a space $X_{U}$ (resp. $X_{L}$) for some Banach lattice $X$ if and only if the unit vector basis is upper (resp. lower) semi-homogeneous in $E$. 
Recall that a basis $\{x_k\}_{k=1}^\infty$ in a Banach space $X$ is called upper (resp. {\it lower}) {\it semi-homogeneous} if it is bounded and every bounded block basic  sequence $\{y_k\}$ of $\{x_k\}$ is dominated by (resp. dominates) $\{x_k\}$.


In the final part of the paper we apply the obtained results to the study of symmetric Lorentz and Orlicz sequence spaces. Under some conditions we identify upper/lower optimal spaces for them that allows us in particular to recover some  properties of the tensor product operator in these spaces. Among other results, in Propositions \ref{Tsir7} and \ref{Tsir8}, we prove that the unit vector basis is upper (resp. lower) semi-homogeneous in the Orlicz space ${\ell_N}$ if and only if $N(uv)\le CN(u)N(v)$ (resp. $N(u)N(v)\le CN(uv)$) for some constant $C>0$ and all $0<u,v\le 1$. Recall that a similar criterion for the lower semi-homogeneity of the Lorentz sequence spaces was obtained by Casazza and Bor-Luh Lin in \cite{CaLin} (by using a completely different method).

\vskip0.5cm

\section{Preliminaries.}
\label{Sec-definitions}

\subsection{Banach lattices, upper and lower estimates for
disjoint elements.}
\label{Ban latt}


We will assume that the reader is familiar with the definition of Banach
lattices, their basic properties and some basic terminology (see, for
instance, \cite{LT79}, \cite{MeNi91}, \cite{Sch12}). For definiteness, all Banach spaces and lattices considered in this paper are assumed to be real and infinite dimensional.





Let us start with recalling the notions of lower and upper estimates in
Banach lattices; see \cite[Definition 1.f.4]{LT79}. 
A Banach lattice $X$ is
said to \textit{satisfy an upper (resp. a lower) $p$-estimate}, where $p\in %
\left[ 1,\infty \right] $, if for some constant $M<\infty $ and all finite
sequences of pairwise disjoint elements $\left\{ x_{i}\right\}
_{i=1}^{n}\subseteq X$ it holds
\begin{equation}  \label{upper est}
\Big\Vert \sum_{i=1}^{n} x_{i} \Big\Vert _{X}\leq M\Big(
\sum_{i=1}^{n}\Vert x_{i}\Vert _{X}^{p}\Big) ^{1/p}
\end{equation}%
(resp. 
\begin{equation}  \label{left est}
\Big( \sum_{i=1}^{n}\Vert x_{i}\Vert _{X}^{p}\Big) ^{1/p}\leq
M\Big\Vert \sum_{i=1}^{n} x_{i} \Big\Vert\;).
\end{equation}

The infimum of all $M$, satisfying the above inequality, is denoted by $M^{%
\left[ p\right] }\left( X\right) $ and $M_{\left[ p\right] }\left( X\right) $%
, respectively. Note that every Banach lattice admits (trivially) an upper $%
1 $-estimate and a lower $\infty $-estimate.

Recall that the Grobler-Dodds indices $\delta (X)$ and $\sigma (X)$ of a
Banach lattice $X$ are defined by 
\begin{equation*}
\delta (X):=\sup \{p\geq 1:\,X\;\text{satisfies an upper $p$-estimate}\}
\end{equation*}%
and 
\begin{equation*}
\sigma (X):=\inf \{p\geq 1:\,X\;\text{satisfies a lower $p$-estimate}\}.
\end{equation*}%
For every Banach lattice $X$ we have $1\leq \delta
(X)\leq \sigma (X)\leq \infty $. Moreover, the following duality relations
hold: 
\begin{equation*}
\frac{1}{\delta (X)}+\frac{1}{\sigma (X^{\ast })}=1\;\;\mbox{and}\;\;\frac{1%
}{\sigma (X)}+\frac{1}{\delta (X^{\ast })}=1.
\end{equation*}

Further, the following somewhat weaker estimates for disjoint elements in
Banach lattices will be also of interest for us. A Banach lattice $X$ is
said to \textit{satisfy an equal-norm upper (resp. an equal-norm lower) $p$%
-estimate}, where $p\in \left[ 1,\infty \right] $, if estimate 
\eqref{upper est} (resp. \eqref{left est}) holds for some constant $M>0$ and all
finite sequences of pairwise disjoint elements $\left\{ x_{i}\right\}
_{i=1}^{n}\subseteq X$ such that $\|x_i\|_X=1$, $i=1,\dots,n$. In other
words, this means that 
\begin{equation*}
\Big\|\sum_{i=1}^{n} x_{i} \Big\|_{X}\leq Mn^{1/p}\;\;\mbox{(resp.}\;\;M%
\Big\|\sum_{i=1}^{n} x_{i} \Big\|\ge n^{1/p}).
\end{equation*}

Clearly, every Banach lattice that admits an upper (resp. a lower) $p$%
-estimate admits an equal-norm upper (resp. an equal-norm lower) $p$%
-estimate as well. Moreover, it is known (see \cite[Lemma 2.3]{CwNiSc03} or \cite[Theorem 2]{Byl}) that 
\begin{equation*}
\delta(X)=\sup \{p\geq 1:\,X\;\text{satisfies an equal-norm upper $p$%
-estimate}\}
\end{equation*}%
and 
\begin{equation*}
\sigma (X)=\inf \{p\geq 1:\,X\;\text{satisfies an equal-norm lower $p$%
-estimate}\}.
\end{equation*}
At the same time, there are Banach lattices which admit an equal-norm upper
(resp. an equal-norm lower) $p$-estimate, but fail to admit an upper (resp.
a lower) $p$-estimate. We refer, as an example, to the space $V_{\gamma,p}$, introduced by Tzafriri in \cite{Tz}.







Let $1\le p\le\infty$. We say that ${\ell_p}$ is \textit{finitely
lattice representable} in a Banach lattice $X$ whenever for every $n\in%
\mathbb{N}$ and each $\varepsilon >0$ there exist pairwise disjoint elements $%
x_{i}\in X$, $i=1,2,\dots,n$, such that for any $a_{i}\in \mathbb{R}$, $i=1,\dots,n$  we have

\begin{equation}  \label{Fin distr}
\left( \sum_{i=1}^{n}\left\vert a_{i}\right\vert ^{p}\right) ^{1/p}\leq
\left\Vert \sum_{i=1}^{n}a_{i}x_{i}\right\Vert _{X}\leq \left( 1+\varepsilon
\right) \left( \sum_{i=1}^{n}\left\vert a_{i}\right\vert ^{p}\right) ^{1/p}.
\end{equation}

Recall that a Banach lattice $X$ is \textit{order continuous} if, for every downward directed set $\{x_\alpha\}_{\alpha\in A}\subset X$ with $\bigvee_{\alpha\in A}x_\alpha=0$, we have $\lim_{\alpha}\|x_\alpha\|_X=0$.


\vskip0.3cm

\def\hj{{\mathbb R}}
\def\hk{{\mathbb Z}}
\def\fg{{\mathbb N}}

\subsection{Banach function spaces}
\label{Function lattices}

Let $(T,\Sigma,\mu)$ be a complete $\sigma$-finite measure space. The set of all a.e. finite real-valued functions (of equivalence classes) defined on $T$ with natural algebraic operations and the topology of convergence in measure $\mu$ on sets of finite measure is a linear topological space. We denote it by $S(T,\Sigma,\mu)$. The notation $x \leq y$, where $x,y \in S(T,\Sigma,\mu)$, means that there is a set $A\subset T$ such that $\mu(T\setminus A)=0$ and $x(t) \leq y(t)$ for $t\in A$ (we say that $x \leq y$ a.e. on $T$). 

A linear subspace $X \subset S(T,\Sigma,\mu)$ is said to be a {\it function lattice}  if $\{x \in S(T,\Sigma,\mu):\,|x| \leq |y|\}\subset X$ for every $y \in X$. Any function lattice $X$, equipped with a norm such that $\|x\| \leq \|y\|$ whenever $x,y \in X$ and $|x| \leq |y|$, is called a {\it normed function space}. If $X$ is complete as a normed linear space, $X$ is said to be a {\it Banach function space (or lattice)}.


Any Banach function space $X$ is linearly and continuously embedded into the corresponding linear topological space $S(T,\Sigma,\mu)$ (see e.g., \cite[Theorem~4.3.1]{KA}). This means that from the convergence in norm it follows the convergence in measure on all subsets of $T$ of finite measure.

If $X$ is a Banach function space, then the {\it K\"{o}the dual} 
(or {\it associated}) function lattice $X'$ consists of all $y\in S(T,\Sigma,\mu)$ such that
$$
\|y\|_{X'}:=\sup\,\biggl\{\int_{T}{x(t)y(t)\,d{\mu}}:\;\;
\|x\|_{X}\,\leq{1}\biggr\}\,<\,\infty.
$$
Any K\"{o}the dual space $X'$ is complete with respect to the norm $y\mapsto \|y\|_{X'}$ and is embedded isometrically into (Banach) dual space $X^*.$ Moreover, $X$ is continuously embedded into its second K\"{o}the dual $X''$, and $\|x\|_{X''}\le\|x\|_X$ for $x\in X$. A Banach function space $X$ is embedded into $X''$  isometrically if and only if the norm of $X$ is {\it order semi-continuous}, i.e.,  if the conditions $x_n\in X,$ $n=1,2,\dots,$
$x\in{X}$, and $x_n\to{x}$ a.e. on $T$ imply that $||x||_X\le
\liminf_{n\to\infty}{||x_n||_X}$ \cite[Theorem~6.1.6]{KA}. A Banach function space $X$ {\it has the Fatou property} (or {\it maximal}) if from $x_n\in
X,$ $n=1,2,\dots,$ $\sup_{n=1,2,\dots}\|x_n\|_X<\infty$, $x\in
S(T,\Sigma,\mu)$ and $x_n\to{x}$ a.e. on $T$ it follows that $x\in X$ and $||x||_X\le \liminf_{n\to\infty}{||x_n||_X}.$ Observe that a Banach function lattice $X$ has the Fatou property if and only if the natural inclusion of $X$ into $X''$ is a surjective isometry \cite[Theorem~6.1.7]{KA}. 



In the case when the measure space $(T,\Sigma,\mu)$ is the set of positive integers  with the counting measure $\mu$ defined by $\mu(\{k\})=1,$ $k \in {\fg}$, we will say about {\it Banach sequence spaces (or lattices)}. If $E$ is such a space, the {\it K\"{o}the dual}  sequence lattice $E'$ is the set of all sequences $y=(y_k)_{k=1}^\infty$ such that
$$
\|y\|_{E'}:=\sup\,\biggl\{\sum_{k=1}^\infty x_ky_k:\;\;
\|x\|_{E}\,\leq{1}\biggr\}<\infty.
$$

For a detailed exposition of the theory of Banach function and sequence spaces see the monographs \cite{BSh}, \cite{KA}, \cite{LT79}.

\vskip 0.3cm


\subsection{Symmetric sequence spaces}
\label{RI}




Let $(a_{k})_{k=1}^{\infty }$ be a bounded sequence of real numbers. In what follows, $(a_{k}^{\ast })_{k=1}^{\infty }$ denotes the nonincreasing
permutation of the sequence $(|a_{k}|)_{k=1}^{\infty }$ defined by 
\begin{equation*}
a_{k}^{\ast }:=\inf_{\mathrm{card}\,A=k-1}\sup_{i\in\mathbb{N}\setminus
A}|a_i|,\;\;k\in\mathbb{N}.
\end{equation*}

A Banach sequence space $E$ is called \textit{symmetric} if 
$E\subset {\ell_\infty}$ and the conditions $y_{k}^{\ast }=x_{k}^{\ast }$, $%
k=1,2,\dots $, $x=(x_{k})_{k=1}^{\infty }\in E$ imply that $%
y=(y_{k})_{k=1}^{\infty }\in E$ and $\Vert y\Vert _{E}=\Vert x\Vert _{E}$.

For every symmetric sequence space $E$ the following one norm continuous embeddings hold:
${\ell_1}\overset{1}{\hookrightarrow }E\overset{1}{\hookrightarrow }{\ell_\infty}$ \cite[Theorem II.4.1]{KPS}. 
Moreover, if $a=(a_k)_{k=1}^\infty\in E$ and $\pi$ is an arbitrary permutation of $\mathbb{N}$, then the sequence $a_\pi=(a_{\pi(k)})_{k=1}^\infty$ belongs to $E$ and $\|a_\pi\|_E=\|a\|_E.$

Let $E$ be a symmetric sequence space. The \textit{fundamental function} of
$E $ is defined by the formula 
\begin{equation*}
\phi_{E}(n):=\Big\|\sum_{k=1}^n e_k\Big\|_{E},\;\;n=1,2,\dots.
\end{equation*}
In what follows, $e_k$ are the canonical unit vectors, i.e., $%
e_k=(e_k^i)_{i=1}^\infty$, $e_k^i=0$ for $i\ne k$ and $e_k^k=1$, $%
k,i=1,2,\dots$. 
For every symmetric space $E$, $\phi_{E}$ is a nondecreasing and positive function such that $\varphi _E(n)/n$ is nonincreasing.

Let $\phi_E=\phi_E(n)$, $n\in\mathbb{N}$, be the fundamental function of a
symmetric sequence space $E$. We introduce the following \textit{%
dilation functions} of $\phi_E$: 
\begin{equation*}
M_E^0(n):=\sup_{m\in\mathbb{N}}\frac{\phi_E(m)}{\phi_E(mn)}\;\;\mbox{and}%
\;\;M_E^\infty(n):=\sup_{m\in\mathbb{N}}\frac{\phi_E(mn)}{\phi_E(m)},\;\;n\in%
\mathbb{N},
\end{equation*}
and the \textit{fundamental indices} of $E$ by 
\begin{equation}
\label{gen ind}
\mu_E= -\lim_{n\to\infty}\frac{1}{n}\log_2 M_E^0(2^{n})\;\;\mbox{and}%
\;\;\nu_E= \lim_{n\to\infty}\frac{1}{n}\log_2 M_E^\infty(2^{n})
\end{equation}
\cite[Chapter 3, Exercise~14, p.~178]{BSh}. We have always that $0\le \mu_E\le \nu_E\le 1$ (see \cite[\S\,II.4.4]{KPS}).

The most important examples of symmetric sequence spaces are the ${\ell_p}$%
-spaces, $1\le p\le\infty,$ with the usual norms 
\begin{equation*}
{\|a\|}_{{\ell_p}}:=\left\{ 
\begin{array}{ll}
{\left( {\sum}_{k=1}^{\infty}{|a_k|}^{p} \right)}^{1/p}\;,\; & 1 \leq p <
\infty \\ 
\sup\limits_{k=1,2,\dots}|a_k|\;,\; & p=\infty%
\end{array}%
. \right.
\end{equation*}

Their generalization, the ${\ell_{p,q}}$-spaces, $1<p<\infty,$ $1\le
q\le\infty,$ are equipped with the quasi-norms 
\begin{equation*}
\|a\|_{p,q}:=\Big(\sum_{k=1}^\infty (a_k^*)^q(k^{q/p}-(k-1)^{q/p})\Big)%
^{1/q}<\infty\;\;\mbox{for}\;\;q<\infty,
\end{equation*}
and 
\begin{equation*}
\|a\|_{p,\infty}:=\sup_{n\in\mathbb{N}}n^{1/{p-1}}\sum_{k=1}^n a_k^*<\infty
\end{equation*}
(see e.g. \cite{D01}). 
The functional $a\mapsto \|a\|_{{\ell_{p,q}}}$ does not
satisfy the triangle inequality for $p<q\le\infty$, but it is equivalent to
some symmetric norm (see, e.g., \cite[Theorem~4.4.3]{BSh}). Clearly, $\ell_{p,p}={\ell_p}$, $1<p<\infty,$ isometrically.
Moreover, for every $1<p<\infty,$ $1\le q\le\infty,$ we have $({\ell_{p,q}})^{\prime }=\ell_{p',q'}$, where $1/p+1/p'=1$, $1/q+1/q'=1$.

If the fundamental function $\phi_E$ of a symmetric sequence space $E$ satisfies
the estimate $\phi_E(m)\le Cm^{1/p}$ (resp. $m^{1/p}\le C\phi_E(m)$) for
some $C>0$ and all $m\in\mathbb{N}$, we have that ${\ell_{p,1}}\overset{C}{\hookrightarrow } E$ (resp. $E\overset{C}{\hookrightarrow } {\ell_{p,\infty}}$) (see \cite[Lemma II.5.2]{KPS}).

In turn, the $\ell_{p,q}$-spaces (when $1\leq q\leq p<\infty $) belong to a wider
class of the Lorentz spaces. Let $1\leq q<\infty $, and let $%
\{w_{k}\}_{k=1}^{\infty }$ be a nonincreasing sequence of positive numbers such that $w_1=1$, $\lim_{k\to\infty} w_k=0$ and $\sum_{k=1}^\infty w_k=\infty$.
The Lorentz space $\lambda _{q}(w)$ (see, e.g., \cite{Lo-51} or \cite[%
Chapter~4e]{LT77}) consists of all sequences $a=(a_{k})_{k=1}^{\infty }$
satisfying 
\begin{equation*}
\Vert a\Vert _{\lambda _{q}(w)}:=\Big(\sum_{k=1}^{\infty }(a_{k}^{\ast
})^{q}w_{k}\Big)^{1/q}<\infty .
\end{equation*}%
Since the classical Hardy-Littlewood inequality (see, e.g., \cite[%
Theorem~2.2.2]{BSh}) yields 
\begin{equation*}
\Vert a\Vert _{\lambda _{q}(w)}=\sup_{\pi }\Big(\sum_{k=1}^{\infty }|a_{\pi
(k)}|^{q}w_{k}\Big)^{1/q},
\end{equation*}%
where the supremum is taken over all permutations $\pi $ of the set of
positive integers, the functional $a\mapsto \Vert a\Vert _{\lambda _{q}(w)}$
defines on the space $\lambda _{q}(w)$ a symmetric norm. Every Lorentz
sequence space is separable and has the Fatou property.

Clearly, the fundamental function of $\lambda_q(w)$ is defined by 
\begin{equation}  \label{fund Lor}
\phi_{\lambda_q(w)}(n)=\Big(\sum_{k=1}^n w_k\Big)^{1/q},\;\;n\in\mathbb{N}.
\end{equation}
Therefore, by \eqref{gen ind}, the fundamental indices of $\lambda_q(w)$ can be calculated by the
formulae 
\begin{equation}  \label{Boyd ind Lor}
\mu_{\lambda_q(w)}=-\lim_{n\to\infty}\frac{1}{n}\log_2\sup_{j\in\mathbb{N}%
}\left(\frac{\sum_{k=1}^{j} w_k}{\sum_{k=1}^{2^nj} w_k}\right)^{1/q}.
\end{equation}
and 
\begin{equation}  \label{Boyd ind Lor1}
\nu_{\lambda_q(w)}= \lim_{n\to\infty}\frac{1}{n}\log_2\sup_{j\in\mathbb{N}%
}\left(\frac{\sum_{k=1}^{2^nj} w_k}{\sum_{k=1}^{j} w_k}\right)^{1/q}.
\end{equation}

Another natural generalization of the $\ell _{p}$-spaces are the Orlicz spaces
(see \cite{KR61}, \cite{Mal}, \cite{Mus}, \cite{RR}). Let $N$ be
an Orlicz function, that is, an increasing convex continuous function on $%
[0,\infty )$ such that $N(0)=0$ and $\lim_{t\rightarrow \infty }N(t)=\infty $%
. The \textit{Orlicz sequence space} $l_{N}$ consists of all sequences $%
a=(a_{k})_{k=1}^{\infty }$ such that 
\begin{equation*}
\Vert a\Vert _{l_{N}}:=\inf \left\{ u>0:\,\sum_{k=1}^{\infty }N\Big(\frac{%
|a_{k}|}{u}\Big)\leq 1\right\} <\infty .
\end{equation*}%
Without loss of generality, we will assume that $N(1)=1$. In particular, if $%
N(s)=s^{p}$, $1\leq p<\infty $, we obtain $\ell ^{p}$ with the usual norm.

Observe that the definition of the space $\ell_{N}$ depends (up to equivalence of norms) only on the behaviour of the function $N$ near zero.

Every Orlicz space ${\ell_N}$ has the Fatou property and it is separable if and
only if the function $N$ satisfies the \textit{$\Delta_2$-condition at zero}%
, i.e., 
\begin{equation*}
\limsup_{u\to 0}\frac{N(2u)}{N(u)}<\infty.
\end{equation*}
Since the fundamental function $\phi_{{\ell_N}}$ is defined by 
\begin{equation}  \label{fund Orl}
\phi_{{\ell_N}}(n)=\frac{1}{N^{-1}(1/n)},\;\;n\in\mathbb{N},
\end{equation}%
where $N^{-1}$ is the inverse function for $N$, one can readily check that
the fundamental indices of ${\ell_N}$ can be calculated by the formulae: 
\begin{equation}  \label{fund ind Orl}
\mu_{{\ell_N}}=\sup\Big\{1/q:\,\inf_{0<s,t\le 1}\frac{N(st)}{N(s)t^q%
}>0\Big\}\;\;\mbox{and}\;\;\nu_{{\ell_N}}=\inf\Big\{1/q:\,\sup_{0<s,t\le 1}\frac{N(st)}{N(s)t^q}<\infty\Big\}
\end{equation}
(similar formulae for Orlicz spaces on $[0,1]$ can be found in \cite[Proposition 2.b.5]{LT79}).

Finally, $\ell_{N}^{\prime }=\ell_{\tilde{N}} $, with the
Young conjugate function $\tilde{N}$.



\vskip0.3cm

\subsection{Lower/upper semi-homogeneous bases in Banach spaces}
\label{Lower/upper semi-homogeneous}

A basis $\{x_k\}_{k=1}^\infty$ in a Banach space $X$ is called {\it lower} (resp. {\it upper}) {\it semi-homogeneous} if it is bounded and every bounded block basic  sequence $\{y_k\}$ of $\{x_k\}$ dominates (resp. is dominated by) $\{x_k\}$. In particular, if a basis is both lower and upper semi-homogeneous, then it is  perfectly homogeneous and so it is equivalent, by a result of Zippin \cite{Zip}, to the unit vector basis  of $c_0$ or ${\ell_p}$, $1\le p<\infty$. 

Let $E$ be a symmetric sequence space. Then, if the unit vector basis $\{e_k\}_{k=1}^\infty$ is lower (resp. upper) semi-homogeneous in $E$, we have, for any $m\in\mathbb{N}$ and every pairwise disjoint elements $u_{k}\in E$, $k=1,2,\dots,m$, that 
\begin{equation*}
\Big\Vert \sum_{k=1}^{m}u_{k}\Big\Vert _{E}\leq \Big\Vert
\sum_{k=1}^{m}\Vert u_{k}\Vert _{E}e_{k}\Big\Vert _{E}
\end{equation*}
(resp. 
\begin{equation*}
\Big\Vert \sum_{k=1}^{m}\Vert u_{k}\Vert
_{E}e_{k}\Big\Vert _{E}\leq \Big\Vert
\sum_{k=1}^{m}u_{k}\Big\Vert _{E}\;).
\end{equation*}

In what follows, we call a Banach sequence lattice $E$ {\it lower (resp. upper) semi-homogeneous} if the unit vector basis $\{e_k\}_{k=1}^\infty$ is lower (resp. upper) semi-homogeneous in $E$.

Let $F_1$ and $F_2$ be two positive functions (quasinorms). We write $F_1\asymp F_2$ if there exists a positive constant $C$ that does not depend
on the arguments of $F_1$ and $F_2$ such that $C^{-1}F_1\leq F_2\le CF_1$.
Finally, for any finite set $E\subset\mathbb{N}$ by $|E|$ we denote cardinality
of $E$. 

\vskip0.5cm

\section{\label{Sec-optimal-spaces}Optimal Upper and Lower Sequence Lattices}

\subsection{\label{subsec-optimal-def}Definitions and general properties}

We start with recalling the definition and some general properties of a special kind of sequence spaces generated by some appropriate sequences of norms on $\mathbb{R}^n$, $n\in\mathbb{N}$ (see \cite{AN1}).

Let $X$ be an (infinite dimensional) Banach lattice, $S_{X}:=\{x\in X:\,\Vert x\Vert _{X}=1\}$. For each positive integer $n,$ let $\mathfrak{B}_{n}\left( X\right) $ denote the set
of all sequences $\left\{ x_{i}\right\} _{i=1}^{n}\subseteq S_{X}$ of pairwise disjoint elements. Observe that these sets are non-empty for every $n$ (see Lemma 3.2 in \cite{AN1}). Based on $X$, we construct the \textit{upper} $X_{U} $ and the  \textit{lower} $X_{L}$ \textit{optimal sequence spaces} that satisfy the following norm one continuous
embeddings: 
\begin{equation}
{\ell_1}\overset{1}{\hookrightarrow }X_{U}\overset{1}{\hookrightarrow }X_{L}%
\overset{1}{\hookrightarrow }{\ell_\infty}. 
\label{embeddings}
\end{equation}%

To construct $X_{U}$ we define first, for each fixed positive integer $n$, the
following norm on $\mathbb{R}^{n}$ by 
\begin{equation*}
\left\Vert (a_{i})_{i=1}^{n}\right\Vert _{X_{U}\left(
n\right) }:=\sup \left\{ \left\Vert \sum_{i=1}^{n}a_{i}x_{i}\right\Vert
_{X}:\left\{ x_{i}\right\} _{i=1}^{n}\in \mathfrak{B}_{n}\left( X\right)
\right\} .
\end{equation*}%
Let $X_{U}$ then be the space of all real-valued sequences $a=(a_{i})_{i=1}^{\infty }$, for which%
\begin{equation*}
\left\Vert a\right\Vert _{X_{U}}:=\sup_{n}\left\Vert (a_{i})_{i=1}^{n}\right\Vert _{X_{U}\left( n\right) }<\infty .
\end{equation*}%
Since 
\begin{equation*}
\left\Vert (a_{i})_{i=1}^{n}\right\Vert _{X_{U}\left(
n\right) }\leq \left\Vert (a_{i})_{i=1}^{n}\right\Vert _{\ell_1^n}:= \sum_{i=1}^{n}\left\vert a_{i}\right\vert ,\;\;n\in \mathbb{N},
\end{equation*}%
we get the left-hand side embedding in $\left( \ref{embeddings}\right) $. Clearly, if $X$ satisfies an
upper $p$-estimate, we have 
\begin{equation*}
\left\Vert a\right\Vert _{X_{U}}\leq M^{\left[ p\right] }\left( X\right)
\left\Vert a\right\Vert _{{\ell_p}}.
\end{equation*}

The first step in the definition of the space $X_{L}$ is the introduction of
the functional $\Phi$, defined for $a=(a_{i})_{i=1}^{n}\in 
\mathbb{R}^{n}$ by 
\begin{equation*}
\Phi _{n}\left( a\right) :=\inf \left\{ \left\Vert
\sum_{i=1}^{n}a_{i}x_{i}\right\Vert _{X}:\,\left\{ x_{i}\right\}
_{i=1}^{n}\in \mathfrak{B}_{n}\left( X\right) \right\} .
\end{equation*}%
Next, we set
\begin{equation*}
\left\Vert a\right\Vert _{X_{L}\left( n\right) }:=\inf \left\{ \sum_{k\in
F}\Phi _{n}\left( a^{k}\right) :\,F\subseteq \mathbb{N},\left\vert
F\right\vert <\infty ,a^{k}\in \mathbb{R}^{n},a=\sum_{k\in F}a^{k}\right\} .
\end{equation*}%
Since
$$
\left\Vert a\right\Vert _{\ell_{\infty
}^{n}}:=\max_{1\leq i\leq n}\left\vert a_{i}\right\vert \leq \Phi
_{n}\left( a\right),$$ 
the functional $a\mapsto \left\Vert a\right\Vert _{X_{L}\left( n\right)
}$ is a norm on $\mathbb{R}^{n}$. Finally, we define $X_{L}$ to be the space
of all real-valued sequences $a=(a_{i})_{i=1}^{\infty }$, for
which 
\begin{equation*}
\left\Vert a\right\Vert _{X_{L}}:=\sup_{n}\left\Vert (a_{i})_{i=1}^{n}\right\Vert _{X_{L}\left( n\right) }<\infty .
\end{equation*}%
One can easily see that the second and third embeddings in $\left( \ref%
{embeddings}\right) $ hold and moreover
\begin{equation*}
\left\Vert a\right\Vert _{{\ell_p}}\leq M_{\left[ p\right] }\left( X\right)
\left\Vert x\right\Vert _{X_{L}},
\end{equation*}%
whenever the Banach lattice $X$ satisfies a lower $p$-estimate.

Note that the construction, which leads to the space $X_{L}$, is close to the one
developed by Junge in \cite{Jun02}.

Next, we proceed with proving some properties of the spaces $X_{U}$ and $X_{L}$, which complement the results obtained in the paper \cite{AN1}. The first result can be treated as a sharp version of Theorem 3.3 from \cite{AN1}.

\begin{theorem}
\label{Th_XL_XU_Prop} For every Banach lattice $X$, the spaces $X_{U}$ and $X_{L}$ are symmetric.
\end{theorem}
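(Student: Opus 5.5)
The plan is to reduce everything to properties of the finite-dimensional norms $\|\cdot\|_{X_U(n)}$, $\Phi_n$ and $\|\cdot\|_{X_L(n)}$, and then pass to the suprema defining $\|\cdot\|_{X_U}$ and $\|\cdot\|_{X_L}$. Four properties are needed:
\begin{itemize}
\item invariance under permutations of coordinates;
\item invariance under changes of sign;
\item the lattice (ideal) property: if $|a_i|\le|b_i|$ for all $i$, then $\|a\|\le\|b\|$;
\item compatibility between different $n$.
\end{itemize}
Completeness, and $X_U,X_L\subset\ell_\infty$, then follow from the embeddings \eqref{embeddings} by the standard coordinatewise-limit argument, since each norm is a supremum of norms of finite sections.

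\textbf{Finite-dimensional invariance and monotonicity.} Fix $n$.

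For $X_U(n)$ and $\Phi_n$, permutation and sign invariance are immediate. If $\{x_i\}_{i=1}^n\in\mathfrak B_n(X)$, $\pi$ is a permutation of $\{1,\dots,n\}$ and $\varepsilon_i=\pm1$, then $\{\varepsilon_i x_{\pi^{-1}(i)}\}\in\mathfrak B_n(X)$. Hence the sup and the inf are taken over the same set of vectors $\sum a_i x_i$ for $a$ and for $(\varepsilon_i a_{\pi(i)})$.

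Monotonicity for both also holds: by disjointness $\bigl|\sum a_ix_i\bigr|=\sum|a_i||x_i|\le\sum|b_i||x_i|=\bigl|\sum b_ix_i\bigr|$, and $X$ has a lattice norm.

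For $X_L(n)$, permutation and sign invariance pass through the infimum over decompositions $a=\sum_k a^k$, because one can apply the same permutation or signs to every $a^k$. Monotonicity then follows from convexity. If $|a_i|\le|b_i|$, write $a_i=t_ib_i$ with $t_i\in[-1,1]$. Then $a$ is a convex combination of the vectors $(\varepsilon_ib_i)_i$, $\varepsilon\in\{\pm1\}^n$, and all of these have the same $X_L(n)$-norm as $b$.

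\textbf{Compatibility in $n$ (the main obstacle).} Since $\|a\|_{X_U}=\sup_n\|(a_i)_{i\le n}\|_{X_U(n)}$, and similarly for $X_L$, permutation invariance of the infinite norm requires comparing norms computed in different dimensions. Concretely, I need
\[
\|(a_1,\dots,a_n)\|_{X_U(n)}\le \|(a_1,\dots,a_n,0,\dots,0)\|_{X_U(m)}\qquad (m\ge n),
\]
and the analogue for $\Phi$ and $X_L$; the reverse inequalities are trivial by restricting families. Granting this, the supremum over $n$ is a nondecreasing limit. For a finitely supported $a$ and any permutation $\pi$, choose $N$ with both supports inside $\{1,\dots,N\}$; the finite-dimensional invariance above then gives $\|a_\pi\|=\|a\|$. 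General $a$ with $a^*=b^*$ are handled by truncation, using that each norm is the supremum over finite sections, together with the lattice property.

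The inequality above amounts to extending a disjoint normalized family $\{x_i\}_{i=1}^n$ by $m-n$ further normalized elements disjoint from it. This need not be possible verbatim, so I would first try a replacement argument. Given $\{x_i\}\in\mathfrak B_n(X)$ and $\{y_j\}\in\mathfrak B_m(X)$ (which exists by Lemma 3.2 of \cite{AN1}), use that $X$ is infinite dimensional to find, up to an $\varepsilon$-perturbation, an index among the $x_i$ whose band is infinite dimensional. That band can then be split to produce extra disjoint normalized elements, while the norm of $\sum a_ix_i$ is changed only by $\varepsilon$.

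If this fails, the fallback is to prove directly that $\|\cdot\|_{X_U}$ and $\|\cdot\|_{X_L}$ are unchanged under the coordinate-preserving permutations needed. The mechanism would be to note that only the supports of the vectors matter, and that the padding-by-zeros comparison is needed only for those $n$ below the largest moved index. For $X_L$, the compatibility for $\Phi_n$ transfers to $\|\cdot\|_{X_L(n)}$ because decompositions in $\mathbb R^n$ embed into $\mathbb R^m$.
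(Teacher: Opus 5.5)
\textbf{The padding step is not proved.} Your reduction to the finite-dimensional norms is sound. So are the finite-dimensional facts you list: permutation and sign invariance, monotonicity of $\|\cdot\|_{X_U(n)}$, $\Phi_n$ and $\|\cdot\|_{X_L(n)}$ (including the convexity argument), and the truncation step for infinite sequences. You also correctly locate the crux: the padding inequalities $\|a\|_{X_U(n)}\le\|(a,0)\|_{X_U(m)}$ and $\Phi_m((a,0))\le\Phi_n(a)$. Without them even $\|e_1+e_2\|_{X_U}=\|e_1+e_3\|_{X_U}$ is open. But your proposal does not establish this step.

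The sketch ``split an infinite-dimensional band, changing $\|\sum a_ix_i\|$ only by $\varepsilon$'' gives no mechanism for the norm control. If the control is meant to come from $\|y_i-x_i\|$ being small, it is impossible in general. Take $X=C(K)$ with $K=[0,1]\sqcup[2,3]$, $x_1=\mathbf 1_{[0,1]}$ and $x_2=\mathbf 1_{[2,3]}$. Any disjoint $y_1,y_2$ with $\|y_i-x_i\|_\infty<1$ satisfy $|y_1|+|y_2|>0$ on all of $K$, so nothing nonzero is disjoint from them. Room for a new element therefore requires a large perturbation. For a general lattice norm (for instance a renorming of such a space), nothing in your argument shows that some large perturbation keeps $\|\sum a_iy_i\|$ almost unchanged. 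Your fallback still needs the same padding comparison, so it does not help.

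This is exactly where the paper brings in outside input. It cites \cite{AN1}, Theorem 3.3, whose relevant ingredient (Lemma 6.2 there) needs disjoint $w^{(m)}_1,\dots,w^{(m)}_m$ with $\|\sum_k w^{(m)}_k\|_X=1$ and $\min_j\|w^{(m)}_j\|_X\to0$. Such elements exist when \eqref{Ts2a} holds. When \eqref{Ts2a} fails, $\|\sum a_kx_k\|_X\le C\max_k|a_k|$, so $X_U=\ell_\infty$.

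\textbf{How to close the gap.} Use averaging over many candidate splittings instead of a single one.
\begin{itemize}
\item Assume $x_i\ge0$ and $a_i\ge0$. Take pairwise disjoint $z_1,\dots,z_k\ge0$ of norm one (they exist by \cite{AN1}, Lemma 3.2) and $t>0$.
\item Set $x_i^{(j)}:=(x_i-tz_j)^+=x_i-x_i\wedge tz_j$. Since the $z_j$ are disjoint, $\sum_j x_i\wedge tz_j=x_i\wedge t\sum_j z_j\le x_i$, hence $\sum_{j=1}^k x_i^{(j)}\ge(k-1)x_i$.
\item For each $i$, $x_i^{(j)}=0$ for at most one $j$.
\item With $U=\sum_i x_i$, the element $(tz_j-U)^+$ is disjoint from $(U-tz_j)^+\ge x_i^{(j)}$. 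By the Archimedean property it is nonzero for every $j$ once $t$ is large.
\item For the $X_U$ inequality, put $S=\|\sum_i a_ix_i\|$. Then $\sum_j\|\sum_i a_ix_i^{(j)}\|\ge(k-1)S$, each term is at most $S$, and at most $n$ values of $j$ are bad. So some $j$ has all $x_i^{(j)}\ne0$ and $\|\sum_i a_ix_i^{(j)}\|\ge(1-\frac{n+1}{k})S$.
\item Normalizing the $x_i^{(j)}$ only enlarges them, since $\|x_i^{(j)}\|\le1$. Letting $k\to\infty$ gives $\|a\|_{X_U(n)}\le\|(a,0)\|_{X_U(n+1)}$.
\item For $\Phi$, note $\sum_j(1-\|x_i^{(j)}\|)\le1$ for each $i$. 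If $k>n/\eta$, some $j$ has $\|x_i^{(j)}\|\ge1-\eta$ for all $i$. This gives $\Phi_{n+1}((a,0))\le(1-\eta)^{-1}\Phi_n(a)$.
\end{itemize}
This argument avoids the paper's dichotomy altogether. It also gives exact (isometric) padding even when \eqref{Ts2a} fails.
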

\begin{proof}
Observe that this result for the space $X_L$ was obtained in \cite{AN1} in full, while for $X_{U}$ it was proved under the additional condition that ${\ell_\infty}$ is not finitely representable in $X$ (see Section \ref{Ban latt}). Thus, it remains to show that this assumption is in fact superfluous. 

Indeed, from an inspection of the proof of
Theorem 3.3 in \cite{AN1} it follows that the above assumption is used only in
the proof of Lemma 6.2 in the same paper. Specifically, we have to select,
for every positive integer $m$, pairwise disjoint elements $%
w_{1}^{(m)},\dots ,w_{m}^{(m)}\in X$ such that $\Vert
\sum_{k=1}^{m}w_{k}^{(m)}\Vert _{X}=1$ and 
$$
\lim_{m\rightarrow \infty }\min_{j=1,\dots ,m}\Vert w_{j}^{(m)}\Vert _{X}=0.$$ Clearly, the latter
holds if we have the following: 
\begin{equation}
\lim_{n\rightarrow \infty }\sup \Big\{\Big\|\sum_{k=1}^{n}x_{k}\Big\|%
_{X}:\,\left\{ x_{k}\right\} _{k=1}^{n}\in \mathfrak{B}%
_{n}\left( X\right)\Big\}=\infty.  
\label{Ts2a}
\end{equation}%
Assuming that \eqref{Ts2a} is not the case, for some $C>0$
and all $n\in \mathbb{N}$, every $\left\{ x_{k}\right\} _{k=1}^{n}\in \mathfrak{B}%
_{n}\left( X\right) $ and $a_{k}\in \mathbb{R}$, $k=1,\dots ,n$, we get 
\begin{equation*}
\Big\|\sum_{k=1}^{n}a_{k}x_{k}\Big\|_{X}\leq C\max_{k=1,\dots
,n}|a_{k}|=C\Vert (a_{k})_{k=1}^{n}\Vert _{\ell_\infty }.
\end{equation*}%
Hence, $X_{U}={\ell_\infty}$, and the result, we wished, follows  again.
\end{proof}

Denote by $X_{L}^{0}$ (resp. $X_{U}^{0}$) the closed linear span of
all finitely supported sequences in $X_{L}$ (resp. $X_{U}$). 

\begin{corollary}
If $X$ is an arbitrary Banach lattice, then the canonical unit vectors $%
e_{k}$, $k=1,2,\dots $, form a symmetric normalized basis in both spaces $%
X_{U}^{0}$ and $X_{L}^{0}$. 

In particular, if the support of a sequence $a=(a_k)_{k=1}^\infty$ is finite, say, $\{k\in\mathbb{N}:\,a_k\ne 0\}\subset \{1,2,\dots ,n\}$ for some $n\in \mathbb{N}$, we have that $\Vert a\Vert _{X_{L}}=\Vert a\Vert _{X_{L}(n)}$ and $\Vert a\Vert _{X_{U}}=\Vert
a\Vert _{X_{U}(n)}$. 
\end{corollary}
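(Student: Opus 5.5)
The plan is to prove the second assertion (the identification of the norms on finitely supported sequences) first, and then deduce the basis statement from it together with Theorem \ref{Th_XL_XU_Prop}.

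\textbf{Step 1: $X_U$.} Fix $a$ with support in $\{1,\dots,n\}$ and take $m\ge n$. Given $\{x_i\}_{i=1}^m\in\mathfrak{B}_m(X)$, the family $\{x_i\}_{i=1}^n$ lies in $\mathfrak{B}_n(X)$ and $\sum_{i=1}^m a_ix_i=\sum_{i=1}^n a_ix_i$. Conversely, any $\{x_i\}_{i=1}^n\in\mathfrak{B}_n(X)$ extends to an element of $\mathfrak{B}_m(X)$. To see this, note that $X$ is infinite dimensional, so by Lemma 3.2 of \cite{AN1} the sets $\mathfrak{B}_k(X)$ are nonempty for all $k$. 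One still has to find norm-one elements disjoint from the given $x_i$. If this is awkward, use the monotonicity $\|(a_i)_{i=1}^m\|_{X_U(m)}\ge\|(a_i)_{i=1}^n\|_{X_U(n)}$, which follows from the restriction argument above, and take the supremum. Concretely: for $m<n$ the truncation $(a_i)_{i=1}^m$ has norm at most the $n$-norm, after restricting families from $\mathfrak{B}_n$ to $\mathfrak{B}_m$. For $m>n$, restriction gives $\le$, and the extension argument gives equality. Hence $\|a\|_{X_U}=\|a\|_{X_U(n)}$.

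\textbf{Step 2: $X_L$.} The same restriction argument gives $\Phi_m(b)\ge\Phi_n(b|_n)$ for $b\in\mathbb{R}^m$. Splitting any decomposition $a=\sum a^k$ in $\mathbb{R}^m$ by truncation then shows $\|a\|_{X_L(m)}\ge\|a\|_{X_L(n)}$. For the reverse inequality with $a$ supported in $\{1,\dots,n\}$, I would take a decomposition in $\mathbb{R}^n$, pad each $a^k$ with zeros, and show $\Phi_m(a^k,0,\dots,0)\le\Phi_n(a^k)$ by extending families. The truncations to $m<n$ are dominated in the same way, so the supremum equals $\|a\|_{X_L(n)}$.

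\textbf{Main obstacle: the extension step.} Given disjoint normalized $x_1,\dots,x_n$, I need a further normalized element disjoint from all of them. My first approach is a symmetry argument instead of explicit extension. Start from a family in $\mathfrak{B}_m(X)$, which exists by the lemma of \cite{AN1}. Then use the symmetry established in Theorem \ref{Th_XL_XU_Prop}: the norm of $X_U$ (resp. $X_L$) is invariant under permutations, and its sup-over-$n$ definition is monotone. From this, the norm of a finitely supported vector is realized already at level $n$. If an extension is genuinely needed, I would use band decomposition. Since $\mathfrak{B}_{n+1}(X)$ is nonempty and the $n+1$ disjoint elements cannot all be absorbed into the $n$ bands generated by the $x_i$ without some cancellation, a component of one of them orthogonal to those bands is available. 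If that argument is delicate, I will appeal directly to the corresponding lemmas in \cite{AN1}.

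\textbf{Step 3: the basis statement.} Granting the norm identity, the vectors $e_k$ have norm $\|e_k\|=\|1\|_{X_U(1)}=1$. The finitely supported sequences are dense in $X_U^0$ and $X_L^0$ by definition. Symmetry of the norms (Theorem \ref{Th_XL_XU_Prop}) gives invariance under permutations and sign changes, which makes $\{e_k\}$ unconditional and hence a basis. Monotone truncation gives basis constant $1$. Together these show that $\{e_k\}$ is a symmetric normalized basis of $X_U^0$ and of $X_L^0$.
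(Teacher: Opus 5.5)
Your proof has a genuine gap at the step you flag yourself: extending a family in $\mathfrak{B}_n(X)$ to one in $\mathfrak{B}_m(X)$, $m>n$. This is false in general. In $X=L_p[0,1]$ the function $x_1=\mathbf{1}$ is a weak unit, so $\{x_1\}\in\mathfrak{B}_1(X)$ has no extension to $\mathfrak{B}_2(X)$. The same holds for every family whose supports cover $[0,1]$. The statement itself is true there; it is the method that fails.

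Your band fallback fails on the same example. Every member of any family in $\mathfrak{B}_{n+1}(X)$ lies in the band generated by $x_1$, which is all of $X$, and nothing cancels.

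The restriction directions are also off. For $m\ge n$, restricting families only gives $\|(a,0,\dots,0)\|_{X_U(m)}\le\|a\|_{X_U(n)}$ and $\Phi_m(b)\ge\Phi_n(b|_n)$. Your ``$\ge$'' monotonicity for $X_U$ and your claim for $m<n$ are backwards: the restriction map $\mathfrak{B}_n\to\mathfrak{B}_m$ does not control a supremum over all of $\mathfrak{B}_m$. Hence the two inequalities that carry the content are not established, namely, for $m<n$ and $b\in\mathbb{R}^m$,
\[
\|b\|_{X_U(m)}\le\|(b,0,\dots,0)\|_{X_U(n)}\quad\text{and}\quad \Phi_n(b,0,\dots,0)\le\Phi_m(b).
\]

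The symmetry fallback does not close this as written.

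\textbf{For $X_U$} it can be made to work, but you must supply the mechanism. Take $b\in\mathbb{R}^n$, $b\ge 0$, and compare $b$ with its shift $c=\sum_{i=1}^n b_ie_{k+i}$. Argue by induction on $n$, using padding for shorter vectors, permutation invariance of each $\|\cdot\|_{X_U(m)}$, and the lattice property. This gives $\|b\|_{X_U}=\|b\|_{X_U(n)}$ and $\|c\|_{X_U}=\|(b,0,\dots,0)\|_{X_U(n+k)}$, and Theorem~\ref{Th_XL_XU_Prop} forces these two to coincide.

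\textbf{For $X_L$} symmetry gives no handle. On finitely supported vectors, permutation invariance of $\|\cdot\|_{X_L}$ already follows from the invariance of each $\|\cdot\|_{X_L(m)}$ and the monotonicity of $m\mapsto\|(a_i)_{i\le m}\|_{X_L(m)}$. It only compares norms at a common level, whereas padding compares different levels. Here you need an approximate-extension lemma, for example:
\begin{itemize}
\item If some $x_{i_0}$ is not a finite sum of atoms, represent its principal ideal as $C(K)$ with $K$ infinite. Cut out a nonempty open set of small mass for a positive functional norming $|x_{i_0}|$. This produces $0\le\tilde x\le|x_{i_0}|$ with $\|\tilde x\|_X\ge 1-\varepsilon$ and a nonzero $z\le|x_{i_0}|$ disjoint from $\tilde x$.
\item If all $x_i$ are finite sums of atoms, they generate a finite-dimensional projection band, and its infinite-dimensional complement supplies the new elements.
\end{itemize}

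The paper never extends families. It reads the statement off from Theorem~\ref{Th_XL_XU_Prop} together with the analysis in \cite{AN1}. Your Step 3 is fine once the norm identity is available.
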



The spaces $X_{U}$ and $X_{L}$ do not need to be order continuous even if $X$ possesses this property (for instance, $(c_0)_U=(c_0)_L={\ell_\infty}$; see \cite[Example 3.5]{AN1}). At the same
time, the construction of the optimal upper and lower sequence spaces
ensures that they \textit{always} have the Fatou property. 

\begin{proposition}
\label{Prop-XL-XU-Fatou}
For every Banach lattice $X$, the spaces $X_{U}$ and $X_{L}$ have the Fatou property. 
\end{proposition}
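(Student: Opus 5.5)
The proof rests on one observation: both norms are defined as suprema over $n$ of norms on $\mathbb{R}^n$. Suppose the norms of the truncations $(a_i)_{i=1}^n$ are continuous on $\mathbb{R}^n$ and monotone in $n$. Then the Fatou property reduces to coordinatewise convergence in finite dimensions. So let $a^{(m)}\in X_U$ (resp. $X_L$) with $\sup_m\|a^{(m)}\|\le K$, and let $a^{(m)}\to a$ coordinatewise. It suffices to show that $\|(a_i)_{i=1}^n\|_{X_U(n)}\le \liminf_m \|a^{(m)}\|_{X_U}$ for every fixed $n$, and similarly for $X_L$. Taking the supremum over $n$ then gives both $a\in X_U$ and the norm inequality.

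\emph{Step 1 (the finite-dimensional norms are dominated by the full norm).} For any sequence $b$ we have $\|(b_i)_{i=1}^n\|_{X_U(n)}\le\|b\|_{X_U}$ directly from the definition as a supremum. The same holds for $X_L$, since $\|b\|_{X_L}=\sup_k\|(b_i)_{i=1}^k\|_{X_L(k)}$ includes the term $k=n$. So I do not need monotonicity in $n$; I only need the $n$-th term. (Alternatively, by the Corollary above, $\|(b_i)_{i=1}^n\|_{X_U(n)}$ equals the $X_U$-norm of the finitely supported truncation.)

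\emph{Step 2 (continuity on $\mathbb{R}^n$).} Both $\|\cdot\|_{X_U(n)}$ and $\|\cdot\|_{X_L(n)}$ are norms on the finite-dimensional space $\mathbb{R}^n$. For $X_U(n)$ this is clear, and it is dominated by the $\ell_1^n$-norm. For $X_L(n)$ it was noted above, and it is also dominated by $\|\cdot\|_{\ell_1^n}$: write $a=\sum_i a_ie_i$ and use $\Phi_n(a_ie_i)=|a_i|$, taking a disjoint system with $x_i$ of norm one. Hence both are continuous with respect to coordinatewise convergence in $\mathbb{R}^n$. Therefore
\[
\|(a_i)_{i=1}^n\|_{X_U(n)}=\lim_m\|(a^{(m)}_i)_{i=1}^n\|_{X_U(n)}\le\liminf_m\|a^{(m)}\|_{X_U},
\]
and the same argument applies to $X_L$.

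\emph{Step 3 (conclusion).} Taking $\sup_n$ gives $\|a\|_{X_U}\le\liminf_m\|a^{(m)}\|_{X_U}\le K<\infty$, so $a\in X_U$, and likewise $a\in X_L$. Since these are sequence spaces with counting measure, a.e. convergence is exactly coordinatewise convergence, so this is precisely the Fatou property as defined in Section~\ref{Function lattices}. The completeness and lattice properties of $X_U$ and $X_L$ are assumed from \cite{AN1} and Theorem~\ref{Th_XL_XU_Prop}.

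The only point I expect to need care with is the bound $\Phi_n(a_ie_i)\le|a_i|$. It is used to get the $\ell_1^n$ domination, and hence continuity, of $\|\cdot\|_{X_L(n)}$. This bound requires $\mathfrak{B}_n(X)\neq\emptyset$, which is guaranteed by Lemma 3.2 of \cite{AN1}. Even without it, finite-dimensionality plus the norm property would give continuity, so there is no real obstacle.
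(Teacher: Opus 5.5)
Your proof is correct and follows the paper's strategy. Fix a truncation length, use that the finite-dimensional norms $\|\cdot\|_{X_U(n)}$ and $\|\cdot\|_{X_L(n)}$ are continuous under coordinatewise convergence, bound each by the full norm, and take the supremum over $n$. The paper checks this continuity by hand: the $\varepsilon m$ estimate for $X_U$, and for $X_L$ spreading $a-a^n$ over the pieces of a near-optimal decomposition, which amounts to re-proving the triangle inequality. You get it at once from the triangle inequality and the $\ell_1^n$ domination. This also lets you handle arbitrary coordinatewise convergence with $\liminf$, matching the paper's stated definition of the Fatou property, rather than only increasing nonnegative sequences.
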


\begin{proof}

We start with proving this result for the optimal upper space $X_U$. Let $%
a^n=(a_k^n)_{k=1}^\infty\in X_U$, $a_k^n\ge 0$, $\sup_n\|a^n\|_{X_U}<\infty$
and $a_k^n\uparrow a_k$ as $n\to\infty$ for each $k\in\mathbb{N}$. We have
to prove that $a=(a_k)_{k=1}^\infty\in X_U$ and $\|a\|_{X_U}\le
\sup_n\|a^n\|_{X_U}$. 

Let $m\in \mathbb{N}$ and $\varepsilon >0$ be fixed. From the
assumption it follows that for all sufficiently large $n$ we have
\begin{equation}
0\leq a_{k}-a_{k}^{n}\leq \varepsilon ,\;\;k=1,2,\dots ,m.  \label{add0b}
\end{equation}%
Hence, for every $\left\{u_{k}\right\} _{k=1}^{m}\in \mathfrak{B}_{m}\left( X\right) $ we obtain 
\begin{equation*}
\Big\|\sum_{k=1}^{m}a_{k}u_{k}\Big\|_{X}\leq \Big\|\sum_{k=1}^m (a_k-a_{k}^{n})u_k\Big\|_X+\Big\|\sum_{k=1}^m a_{k}^{n}u_k\Big\|_X\le \varepsilon m+\Vert
(a_{k}^{n})_{k=1}^{m}\Vert _{X_{U}(m)},
\end{equation*}%
whence 
\begin{equation}
\Vert (a_{k})_{k=1}^{m}\Vert _{X_{U}(m)}\leq \varepsilon m+\Vert a^{n}\Vert
_{X_{U}}  \label{add0}
\end{equation}%
if $n$ is sufficiently large. Therefore, since $\varepsilon >0$ is arbitrary,
we get 
\begin{equation*}
\Vert (a_{k})_{k=1}^{m}\Vert _{X_{U}(m)}\leq \sup_{n}\Vert a^{n}\Vert
_{X_{U}},
\end{equation*}%
which implies that 
\begin{equation*}
\Vert a\Vert _{X_{U}}\leq \sup_{n}\Vert a^{n}\Vert _{X_{U}}<\infty .
\end{equation*}%
Thus, $a\in X_{U}$. 

Now, we proceed with proving the same property for $X_{L}$. Again, assume that $a^{n}=(a_{k}^{n})_{k=1}^{\infty
}\in X_{L}$, $a_{k}^{n}\geq 0$, $\sup_{n}\Vert a^{n}\Vert _{X_{L}}<\infty $
and $a_{k}^{n}\uparrow a_{k}$ as $n\rightarrow \infty $ for each $k\in 
\mathbb{N}$. 

Let $\varepsilon >0$ be arbitrary. For a fixed $m\in \mathbb{N}$ we find a positive integer $n_{0}$ such that inequality $\left( \ref%
{add0b}\right) $ holds for all $n\geq n_{0}$. Let $n\geq n_{0}$ be fixed.
There is a representation $(a_{k}^{n})_{k=1}^{m}=\sum_{i\in A}b^{i}$, $%
b^{i}=(b_{k}^{i})_{k=1}^{m}$, $|A|<\infty $, such that 
\begin{equation}
\Vert (a_{k}^{n})_{k=1}^{m}\Vert _{X_{L}(m)}\geq \sum_{i\in A}\Phi
_{m}(b^{i})-\varepsilon .  \label{add0c}
\end{equation}%
Setting $d_{k}^{i}:=b_{k}^{i}+\frac{1}{|A|}(a_{k}-a_{k}^{n})$, $i\in A$, $%
k=1,\dots ,m$, we obtain 
\begin{equation*}
\sum_{i\in A}d_{k}^{i}=\sum_{i\in
A}b_{k}^{i}+a_{k} -a_{k}^{n}=a_{k}^{n}+a_{k}-a_{k}^{n}=a_{k},\;\;k=1,\dots ,m,
\end{equation*}%
that is, $(a_{k})_{k=1}^{m}=\sum_{i\in A}d^{i}$, where $%
d^{i}=(d_{k}^{i})_{k=1}^{m}$. Furthermore, in view of $\left( \ref{add0b}%
\right) $, we have 
\begin{equation*}
|d_{k}^{i}-b_{k}^{i}|=\frac{1}{|A|}(a_{k}-a_{k}^{n})\leq
\frac{\varepsilon}{|A|},\;\;i\in A,\;k=1,\dots ,m.
\end{equation*}%
Consequently, as above, for every $\left\{ u_{k}\right\} _{k=1}^{m}\in \mathfrak{B}%
_{m}\left( X\right) $ and each $i\in A$, we get 
\begin{equation*}
\Big\|\sum_{k=1}^{m}d_{k}^{i}u_{k}\Big\|_{X}\leq \frac{m\varepsilon}{|A|}+\Big\|%
\sum_{k=1}^{m}b_{k}^{i}u_{k}\Big\|_{X}.
\end{equation*}%
By the definition of the functional $\Phi _{m}(\cdot )$, this
implies that 
\begin{equation*}
\Phi _{m}(d^{i})\leq \frac{m\varepsilon}{|A|}+\Phi _{m}(b^{i}),\;\;i\in A,
\end{equation*}%
and hence from inequality $\left( \ref{add0c}\right) $ it follows that 
\begin{equation*}
\Vert (a_{k})_{k=1}^{m}\Vert _{X_{L}(m)}\leq \sum_{i\in A}\Phi
_{m}(d^{i})+\varepsilon\leq \sum_{i\in A}\Phi _{m}(b^{i})+\varepsilon (m+1)\leq \Vert
(a_{k}^{n})_{k=1}^{m}\Vert _{X_{L}(m)}+\varepsilon (m+2).
\end{equation*}%
Thus, since $\varepsilon >0$ is arbitrary, for every $m\in \mathbb{N}$ there exists $n_{0}$ such that for all $n\geq n_{0}$ 
\begin{equation*}
\Vert (a_{k})_{k=1}^{m}\Vert _{X_{L}(m)}\leq \Vert
a^{n}\Vert _{X_{L}}.
\end{equation*}%
Hence,
\begin{equation*}
\Vert a\Vert _{X_{L}}=\sup_{m}\Vert (a_{k})_{k=1}^{m}\Vert _{X_{L}(m)}\leq
\sup_{n}\Vert a^{n}\Vert _{X_{L}}<\infty ,
\end{equation*}%
that is, $a\in X_{L}$, and the proof is completed.
\end{proof}

In view of embeddings \eqref{embeddings}, $X_{U}$
and $X_{L}$ are both normalized intermediate spaces with respect to the
Banach couple $\left( {\ell_1},\ell_{\infty }\right).$ Moreover, from Proposition 
\ref{Prop-XL-XU-Fatou} and Calder\'{o}n-Mityagin theorem (see \cite[Theorem 3.2.12]{BSh} or  \cite[Theorem II.4.9]{KPS}) we obtain the following. 

\begin{corollary}
\label{Cor-XL-XU-Interpolation}
For every Banach lattice $X$, the optimal sequence lattices $X_{U}$ and $X_{L}$ are exact interpolation spaces with respect to the couple $\left( {\ell_1},\ell_{\infty }\right) .$ 
\end{corollary}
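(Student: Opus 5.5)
We must show that $X_U$ and $X_L$ are exact interpolation spaces for $(\ell_1,\ell_\infty)$. We do this by combining Theorem \ref{Th_XL_XU_Prop}, Proposition \ref{Prop-XL-XU-Fatou} and the Calder\'on--Mityagin description of interpolation spaces between $\ell_1$ and $\ell_\infty$. Let $E$ denote either $X_U$ or $X_L$.

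\emph{Step 1: $E$ is a normalized intermediate space.} The embeddings \eqref{embeddings} give $\ell_1\overset{1}{\hookrightarrow}E\overset{1}{\hookrightarrow}\ell_\infty$, so $E$ is intermediate for the couple $(\ell_1,\ell_\infty)$ with norm-one inclusions.

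\emph{Step 2: $E$ is monotone with respect to the Hardy--Littlewood--P\'olya order.} Let $T$ be an operator of norm at most one on both $\ell_1$ and $\ell_\infty$. By the Calder\'on--Mityagin theorem, if $b=Ta$, then $b$ is submajorized by $a$:
\begin{equation*}
\sum_{k=1}^n b_k^*\le\sum_{k=1}^n a_k^*,\qquad n\in\mathbb{N}.
\end{equation*}
By Theorem \ref{Th_XL_XU_Prop}, $E$ is symmetric, and by Proposition \ref{Prop-XL-XU-Fatou} it has the Fatou property. For a symmetric sequence space with the Fatou property, submajorization $b\prec a$ implies $\|b\|_E\le\|a\|_E$; see \cite[Theorem II.4.9]{KPS} or \cite[Theorem 3.2.12]{BSh}. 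Hence $\|Ta\|_E\le\|a\|_E$, and $E$ is an exact interpolation space.

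\emph{Step 3: justify the monotonicity in Step 2.} This is the only step that is not a direct citation, and it is where the work lies. First take finitely supported $a$. Then $b^{(n)}=(b_1^*,\dots,b_n^*,0,\dots)$ is majorized by $a^*$ truncated. By Calder\'on's lemma, $b^{(n)}$ lies in the closed convex hull, in the finite-dimensional space $\mathbb{R}^N$, of sequences of the form $(\varepsilon_k a^*_{\pi(k)})$, where $\varepsilon_k=\pm1$ and $\pi$ is a permutation. By symmetry of $E$, each such sequence has $E$-norm equal to $\|a\|_E$. Since norms on $\mathbb{R}^N$ are continuous, $\|b^{(n)}\|_E\le\|a\|_E$. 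Letting $n\to\infty$, the Fatou property gives $\|b\|_E=\|b^*\|_E\le\|a\|_E$.

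For general $a$ the same approximation applies. The truncations $b^{(n)}$ are submajorized by $a^*$, and the finite-dimensional hull argument can be run against suitable truncations of $a^*$ whose partial sums dominate. This uses that partial sums of $a^*$ are attained on finite sets. The Fatou property then removes the truncation once more.

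Since all of this is standard, the proof will cite \cite[Theorem II.4.9]{KPS} for it. The only point needing care is to note that no separability is required, because the Fatou property of Proposition \ref{Prop-XL-XU-Fatou} is exactly the hypothesis under which the Calder\'on--Mityagin characterization gives exactness.
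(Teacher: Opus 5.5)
Your proposal is correct and follows the paper's argument. The paper likewise gets the normalized intermediate property from \eqref{embeddings}, then combines symmetry (Theorem \ref{Th_XL_XU_Prop}) and the Fatou property (Proposition \ref{Prop-XL-XU-Fatou}) with the Calder\'on--Mityagin theorem \cite[Theorem 3.2.12]{BSh}, \cite[Theorem II.4.9]{KPS}; your Step 3 just sketches the standard monotonicity-under-submajorization argument the paper cites, and in the general case it suffices to compare $b^{(n)}$ with $(a_1^*,\dots,a_n^*,0,\dots)\le a^*$.
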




\vskip0.3cm

\subsection{\label{subsec-duality}{Duality type properties of optimal sequence spaces}}

\begin{proposition}
\label{Prop-Dual.I} Let $X$ be an order semi-continuous Banach function space on some measure space $(T,\Sigma,\mu)$. The following embeddings hold: 
\begin{eqnarray*}
(X')_{L}\overset{1}{\hookrightarrow }(X_{U})'\;\;\mbox{and}\;\;(X')_{U}\overset{1}{\hookrightarrow }\left( X_{L}\right)'.
\end{eqnarray*}
\end{proposition}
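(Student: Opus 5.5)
To prove the first embedding, take $b=(b_k)\in (X')_L$. We have to show that $\sum_k |a_kb_k|\le \|a\|_{X_U}\|b\|_{(X')_L}$ for every $a\in X_U$. Both spaces are symmetric by Theorem \ref{Th_XL_XU_Prop}, and both have the Fatou property by Proposition \ref{Prop-XL-XU-Fatou}. So it suffices to treat finitely supported $a$ and $b$, supported in $\{1,\dots,n\}$, and we may assume $a_k,b_k\ge 0$. Monotone convergence then gives the general case. By the Corollary above, $\|b\|_{(X')_L}=\|b\|_{(X')_L(n)}$ and $\|a\|_{X_U}=\|a\|_{X_U(n)}$.

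The key step is the single-block estimate
$$
\sum_{k=1}^n a_kc_k\le \|a\|_{X_U(n)}\,\Phi_n'(c),\qquad c\in\mathbb{R}^n,
$$
where $\Phi_n'$ is the functional $\Phi_n$ built from $X'$. To prove it, fix $\varepsilon>0$ and pick $\{y_k\}_{k=1}^n\in\mathfrak{B}_n(X')$ with $\|\sum_k c_ky_k\|_{X'}\le \Phi_n'(c)+\varepsilon$. For each $k$, use the definition of the norm of $X'$ to choose $x_k\in X$ with $\|x_k\|_X\le 1$ and $\int x_ky_k\,d\mu\ge 1-\varepsilon$. Replacing $x_k$ by $|x_k|\,\mathrm{sign}(y_k)\chi_{\mathrm{supp}\,y_k}$ does not increase the norm and does not decrease the integral. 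The $x_k$ are then pairwise disjoint, because the $y_k$ are.

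The $x_k$ may have norm strictly less than $1$. If $\|x_k\|_X=\theta_k\in[1-\varepsilon,1]$, set $\tilde x_k=x_k/\theta_k$, so that $\{\tilde x_k\}\in\mathfrak{B}_n(X)$. Also $\|x_k\|_X\ge\int x_ky_k\,d\mu\ge 1-\varepsilon$, so $\theta_k$ is indeed in that range. Since the $x_k$ are disjoint and each $x_k$ lives on the support of $y_k$, we get
$$
(1-\varepsilon)\sum_k a_kc_k\le\int\Big(\sum_k a_kx_k\Big)\Big(\sum_k c_ky_k\Big)d\mu\le \Big\|\sum_k a_k\tilde x_k\Big\|_X\,\Big\|\sum_k c_ky_k\Big\|_{X'}.
$$
The first factor on the right is at most $\|a\|_{X_U(n)}$, and the second is at most $\Phi'_n(c)+\varepsilon$. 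Here we used positivity and the lattice monotonicity $|\sum a_kx_k|\le|\sum a_k\tilde x_k|$. Letting $\varepsilon\to 0$ gives the single-block estimate.

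Next, for any decomposition $b=\sum_{i\in F}b^i$, apply the single-block estimate to each $b^i$ and sum: $\sum_k a_kb_k\le\|a\|_{X_U(n)}\sum_i\Phi_n'(b^i)$. Taking the infimum over all decompositions gives $\langle a,b\rangle\le \|a\|_{X_U}\|b\|_{(X')_L}$. Hence $\|b\|_{(X_U)'}\le\|b\|_{(X')_L}$, which is the first embedding.

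For the second embedding the roles are reversed. Take $b$ finitely supported and nonnegative, and let $a=\sum_i a^i$ be a near-optimal decomposition for $\|a\|_{X_L(n)}$. For each block $a^i$, choose $\{x_k\}\in\mathfrak{B}_n(X)$ nearly attaining $\Phi_n(a^i)$. For each $k$, choose a norming $y_k\in X'$, $\|y_k\|_{X'}\le1$, with $\int x_ky_k\,d\mu\ge 1-\varepsilon$. This is the one place where order semi-continuity is used: it gives $\|x\|_X=\|x\|_{X''}$, which is exactly what guarantees that such norming functionals exist. Restrict each $y_k$ to the support of $x_k$, fix its sign, and renormalize to land in $\mathfrak{B}_n(X')$. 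The same computation as above yields $\sum_k a^i_kb_k\le(1+O(\varepsilon))\Phi_n(a^i)\|b\|_{(X')_U(n)}$. Summing over $i$ gives the claim.

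I expect the main obstacle to be the norming/renormalization step: making sure the chosen norming elements are disjoint and have norm exactly one up to $\varepsilon$. Semi-continuity is needed precisely here, in the second embedding. The passage from finitely supported sequences to general ones is routine given Fatou.
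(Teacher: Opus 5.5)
Your first embedding is the paper's argument: a near-optimal $\{y_k\}\in\mathfrak{B}_n(X')$ for $\Phi_n'$, norming elements of $X$ cut down to the supports of the $y_k$, then a sum over a decomposition and the infimum. For the second embedding you take a genuinely different route, and it is correct.

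The paper does not redo the computation there. It applies the first embedding to $X'$ to get $(X'')_L\hookrightarrow((X')_U)'$. It then uses order semi-continuity in the form ``$X$ is an isometric sublattice of $X''$'', so $\mathfrak{B}_n(X)\subset\mathfrak{B}_n(X'')$ and hence $X_L\hookrightarrow(X'')_L$. Finally it passes to K\"othe duals, which needs the Fatou property of $(X')_U$ (Proposition~\ref{Prop-XL-XU-Fatou}) to identify $((X')_U)''$ with $(X')_U$.

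You instead dualize block by block. You use semi-continuity only to produce norming $y_k\in X'$ for $\{x_k\}\in\mathfrak{B}_n(X)$, which is exactly what the paper's detour through $X''$ encodes, and you obtain $\sum_k a_kb_k\le\|a\|_{X_L}\|b\|_{(X')_U}$ directly. Your version is self-contained, symmetric with the first part, and needs neither the double dual nor the Fatou property of $(X')_U$. The paper's version is shorter because it recycles the first embedding.

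Your reduction to finitely supported sequences also needs neither symmetry nor Fatou. The norms of $X_U$ and $X_L$ are by definition suprema over truncations, and this is how the paper concludes.

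One detail to tighten: the pieces $b^i$ (resp.\ $a^i$) of a decomposition can have entries of both signs even when $b\ge0$. Your displayed lower bound $(1-\varepsilon)\sum_k a_kc_k\le\int(\sum_k a_kx_k)(\sum_k c_ky_k)\,d\mu$ holds termwise only where $a_kc_k\ge0$. So first reduce to $|c|$ via $\sum_k a_kc_k\le\sum_k a_k|c_k|$ and $\Phi_n'(c)=\Phi_n'(|c|)$, replacing $y_k$ by $\mathrm{sign}(c_k)y_k$.
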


\begin{proof}
{To prove the first embedding, we need to show that for every $%
b=(b_{k})\in (X^{^{\prime }})_{L}$ the functional 
\begin{equation*}
\langle a,b\rangle :=\sum_{k=1}^{\infty }a_{k}b_{k},\;\;a=(a_{k})\in X_{U},
\end{equation*}%
is bounded on the space $X_{U}$ and $\Vert b\Vert _{(X_{U})^{\prime }}\leq
\Vert b\Vert _{(X^{^{\prime }})_{L}}$. One can easily see that it suffices
to check that for each $\varepsilon >0$ and all $a=(a_{k})\in X_{U}$, $%
b=(b_{k})\in (X^{^{\prime }})_{L}$ such that $a_{k}\geq 0$, $b_{k}\geq 0$
the following inequality holds: 
\begin{equation}
\sum_{k=1}^{\infty }a_{k}b_{k}\leq (1+\varepsilon )\Vert b\Vert _{(X^{^{\prime
}})_{L}}\Vert a\Vert _{X_{U}}.  
\label{add3a}
\end{equation}
}

{To this end, select $\delta >0$ such that $\frac{1+\delta }{1-\delta 
}<1+\varepsilon $. For any $n\in \mathbb{N}$, by the definition of the functional $\Phi
_{n}^{X'}(\cdot )$, we choose $\left\{ v_{k}\right\}
_{k=1}^{n}\in \mathfrak{B}_{n}\left( X^{^{\prime }}\right) $, $v_k\ge 0$, such that 
\begin{equation}
\Big\|\sum_{k=1}^{n}b_{k}v_{k}\Big\|_{X^{\prime }}\leq (1+\delta )\cdot\Phi
_{n}^{X^{^{\prime }}}((b_{k})_{k=1}^{n}).  \label{add3}
\end{equation}%
Further, there exist $u_{k}\in X$, $u_k\ge 0$, $k=1,\dots ,n$, which satisfy the
conditions: 
$\Vert u_{k}\Vert _{X}=1$ and%
\begin{equation*}
1-\delta <\left\langle u_{k},v_{k}\right\rangle =\int\nolimits_{T} u_{k}( t) v_{k}( t)\,d\mu \leq 1,1\leq k\leq n.
\end{equation*}%
Let us take for $w_{k}$ the restriction of $u_{k}$ to the support of $v_{k}$. Then,  $\left\Vert w_{k}\right\Vert _{X}\leq 1$ and 
$$\left\langle w_{k},v_{k}\right\rangle =\left\langle u_{k},v_{k}\right\rangle>1-\delta,\;1\leq k\leq n.$$ 
Therefore, since $\left\Vert w_{k}\right\Vert \leq 1,$ 
by $\left( \ref{add3}\right) $, we have
\begin{eqnarray*}
\sum_{k=1}^{n}a_{k}b_{k} &\leq &\frac{1}{1-\delta }\left\langle
\sum_{k=1}^{n}a_{k}w_{k},\sum_{k=1}^{n}b_{k}v_{k}\right\rangle\\ &\leq& \frac{1}{%
1-\delta }\max_{1\leq k\leq n}\left\Vert w_{k}\right\Vert _{X}\left\Vert
\sum_{k=1}^{n}a_{k}\frac{w_{k}}{\left\Vert w_{k}\right\Vert }\right\Vert
_{X}\left\Vert \sum_{k=1}^{n}b_{k}v_{k}\right\Vert _{X^{^{\prime }}} \\
&\leq &\frac{1+\delta }{1-\delta }\left\Vert \left( a_{k}\right)
_{k=1}^{n}\right\Vert _{X_{U}\left( n\right) }\Phi _{n}^{X^{^{\prime
}}}\left( \left( b_{k}\right) _{k=1}^{n}\right)\\&<&(1+\varepsilon) \left\Vert \left(
a_{k}\right) _{k=1}^{n}\right\Vert _{X_{U}\left( n\right) }\Phi
_{n}^{X^{^{\prime }}}\left( \left( b_{k}\right) _{k=1}^{n}\right) .
\end{eqnarray*} }
Assume now that $(b_{k})_{k=1}^{n}=\sum_{i\in A}d^{i}$, where $%
d^{i}=(d_{k}^{i})_{k=1}^{n}$ and $|A|<\infty $. Then, as above, choosing functions $v_k$ and $u_k$ in an appropriate way, we get
$$
\sum_{k=1}^{n}a_{k}d_{k}^{i}< (1+\varepsilon) \left\Vert \left(
a_{k}\right) _{k=1}^{n}\right\Vert _{X_{U}\left( n\right) }\Phi
_{n}^{X^{^{\prime }}}\left(d^i\right),\;\;i\in A,$$
and hence 
\begin{equation*}
\sum_{k=1}^{n}a_{k}b_{k}=\sum_{i\in A}\sum_{k=1}^{n}a_{k}d_{k}^{i}<
(1+\varepsilon )\sum_{i\in A}\Phi _{n}^{X^{^{\prime }}}(d^{i})\Vert
(a_{k})_{k=1}^{n}\Vert _{X_{U}(n)}.
\end{equation*}%
Passing to the infimum over above representations of $(b_{k})_{k=1}^{n}$, we
conclude that for all $n\in \mathbb{N}$ 
\begin{equation*}
\sum_{k=1}^{n}a_{k}b_{k}< (1+\varepsilon )\Vert (b_{k})_{k=1}^{n}\Vert
_{(X^{^{\prime }})_{L}(n)}\Vert (a_{k})_{k=1}^{n}\Vert _{X_{U}(n)}\leq
(1+\varepsilon )\Vert b\Vert _{(X^{^{\prime }})_{L}}\Vert a\Vert _{X_{U}},
\end{equation*}%
which implies $\left( \ref{add3a}\right) $. Thus, since $\varepsilon>0$ is arbitrary, the embedding $(X')_{L}\overset{1}{\hookrightarrow }(X_{U})^{\prime }$ is proved. 

To prove the second embedding, let us apply the first one to the K\"{o}the dual space $X^{^{\prime }}.$ It follows that 
\begin{equation}
(X'')_{L}\overset{1}{\hookrightarrow }( ( X^{^{\prime }})_{U})^{^{\prime }}.
\label{dual emb}
\end{equation}
Being order semi-continuous, $X$ is isometrically embedded as a sublattice into $X^{^{\prime \prime }}$ (see Section \ref{Function lattices}). Therefore, $\mathfrak{B}_{n}\left( X\right)\subset \mathfrak{B}_{n}\left( X''\right)$ and 
so $X_L\overset{1}{\hookrightarrow }(X'')_L$. Combining this embedding together with 
\eqref{dual emb}, we obtain that $X_{L}\overset{1}{\hookrightarrow }\left( \left( X^{^{\prime }}\right) _{U}\right) ^{^{\prime }}.$ Passing to the K\"{o}the duals and using the fact that $(X')_{U}$ has the Fatou property (see Proposition \ref{Prop-XL-XU-Fatou}), we deduce
that%
\begin{equation*}
(X')_{U}=\left( (X')_{U}\right)''\overset{1}{\hookrightarrow }\left( X_{L}\right)'.
\end{equation*}%
This completes the proof. 
\end{proof}


{\vskip0.3cm }

\subsection{\label{subsec-estimates}{Optimal sequence spaces and
order estimates of Banach lattices}}

{As it follows from results obtained in \cite{AN1}, the properties of optimal sequence spaces $X_{U}$ and $X_{L}$ are largely determined by the fact which upper/lower estimates are fulfilled in $X$ and, in particular, by the
Grobler-Dodds indices of $X$. 
}

\begin{proposition}
{\cite[Proposition 3.9 and Corollary 3.11]{AN1} 
\label{coincidence with lp} 
For every Banach lattice $X$ we have: }

(i) $X_{U}\overset{1}{\hookrightarrow }\ell_{\delta(X)}$ and $%
{\ell_p}\hookrightarrow X_{U}$ for every $p<\delta(X)$; 

{(ii) $\ell_{\sigma (X)}\overset{1}{\hookrightarrow }X_{L}$ and $%
X_{L}\hookrightarrow \ell_{q}$ for every $q>\sigma (X)$; }

{(iii) $X_U={\ell_p}$ if and only if $p=\delta(X)$ and $X$ admits an upper 
$\delta(X)$-estimate; }

{(iv) $X_{L}=\ell_{q}$ if and only if $q=\sigma (X)$ and $X$ admits a
lower $\sigma (X)$-estimate; }

{(v) $\delta(X_U)=\delta(X)$ and $\sigma(X_L)=\sigma(X)$. }
\end{proposition}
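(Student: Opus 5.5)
The plan is to argue items (i)--(v) in turn. Items (i) and (ii) rest on the equal-norm characterizations of $\delta(X)$ and $\sigma(X)$ together with the finite lattice representability of $\ell_{\delta(X)}$ and $\ell_{\sigma(X)}$ in $X$. The remaining items then follow from (i), (ii) and the embeddings stated in the definitions.

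\textbf{Item (i).} For $X_U\overset{1}{\hookrightarrow}\ell_{\delta(X)}$ I would invoke Krivine's theorem in its lattice form: $\ell_{\delta(X)}$ is finitely lattice representable in $X$, so \eqref{Fin distr} holds with $p=\delta(X)$. Fix $a=(a_i)_{i=1}^n$, $\varepsilon>0$, and take disjoint $x_1,\dots,x_n$ as in \eqref{Fin distr}. Normalize them, $\tilde x_i=x_i/\|x_i\|_X$, so that $\|x_i\|_X\in[1,1+\varepsilon]$ and $\{\tilde x_i\}\in\mathfrak{B}_n(X)$. Then
\[
\|a\|_{X_U(n)}\ge\Big\|\sum a_i\tilde x_i\Big\|_X\ge(1+\varepsilon)^{-1}\Big\|\sum a_ix_i\Big\|_X\ge(1+\varepsilon)^{-1}\|a\|_{\ell_{\delta(X)}^n}.
\]
Letting $\varepsilon\to0$ gives the norm-one embedding.

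For $\ell_p\hookrightarrow X_U$ with $p<\delta(X)$, choose $p<r<\delta(X)$, so that $X$ has an upper $r$-estimate. Then $\|a\|_{X_U}\le M^{[r]}\|a\|_{\ell_r}\le M^{[r]}\|a\|_{\ell_p}$. Passing through $r$ is not even needed, since an upper $p$-estimate already holds for $p<\delta(X)$.

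\textbf{Item (ii).} The statement is dual to (i), but $X_L$ is defined through an infimum, so I would handle it as follows. By finite representability of $\ell_{\sigma(X)}$, for each $a\in\mathbb{R}^n$ we get $\Phi_n(a)\le(1+\varepsilon)\|a\|_{\ell_{\sigma}^n}$. Since $\|a\|_{X_L(n)}\le\Phi_n(a)$ (take the trivial decomposition), this gives $\ell_{\sigma(X)}\overset{1}{\hookrightarrow}X_L$.

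For the other embedding, take $q>\sigma(X)$. Then $X$ has a lower $q$-estimate, so $\|b\|_{\ell_q}\le M_{[q]}\Phi_n(b)$ for every $b$. Since $\|\cdot\|_{\ell_q}$ is a norm, applying this to each piece of a decomposition $a=\sum_k a^k$ and taking the infimum yields $\|a\|_{\ell_q}\le M_{[q]}\|a\|_{X_L(n)}$.

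\textbf{Item (iii).} If $X$ admits an upper $\delta$-estimate with $\delta=\delta(X)$, then $\|a\|_{X_U}\le M\|a\|_{\ell_\delta}$; combined with (i) this gives $X_U=\ell_\delta$. Conversely, suppose $X_U=\ell_p$. From (i), $\ell_{p'}\hookrightarrow\ell_p\hookrightarrow\ell_{\delta(X)}$ for every $p'<\delta(X)$, which forces $p\ge\delta(X)$ and $p\le\delta(X)$, hence $p=\delta(X)$. For the upper estimate, take disjoint $x_i\ne0$ and write $x_i=\|x_i\|\tilde x_i$ with $\{\tilde x_i\}\in\mathfrak{B}_n(X)$. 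Then
\[
\Big\|\sum x_i\Big\|_X\le\big\|(\|x_i\|)_i\big\|_{X_U}\le C\Big(\sum\|x_i\|^p\Big)^{1/p}.
\]
When $\delta=\infty$ this reads as $\ell_\infty$, and the argument is identical.

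\textbf{Item (iv).} This is symmetric to (iii), and here lies the one subtle point. Given $X_L=\ell_q$, I need $\big(\sum\|x_i\|^q\big)^{1/q}\le C\|\sum x_i\|$. Directly I only have
\[
\big\|(\|x_i\|)\big\|_{X_L}\le\Phi_n\big((\|x_i\|)\big)\le\Big\|\sum x_i\Big\|,
\]
which is the right direction, so it works. The identification $q=\sigma(X)$ then follows from (ii) exactly as in (iii).

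\textbf{Item (v).} To show $\delta(X_U)=\delta(X)$, note that $\ell_p\hookrightarrow X_U\hookrightarrow\ell_{\delta(X)}$ for all $p<\delta(X)$, so the fundamental function satisfies $n^{1/\delta}\le\phi_{X_U}(n)\le C_pn^{1/p}$. Disjoint normalized blocks in $X_U$ are sums of elements of the form $\sum a_i\tilde x_i$ built from disjoint elements of $X$, so $X_U$ inherits the upper $p$-estimates of $X$, with the same constant as in $\|\cdot\|_{X_U}\le M^{[p]}\|\cdot\|_{\ell_p}$. Hence $\delta(X_U)\ge\delta(X)$. Conversely, the unit vectors in $X_U$ satisfy $\|\sum_{i\le n}e_i\|\ge n^{1/\delta(X)}$, so no equal-norm upper $p$-estimate with $p>\delta(X)$ can hold, giving $\delta(X_U)\le\delta(X)$. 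The argument for $\sigma(X_L)$ is dual.

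\textbf{Main obstacle.} The step I expect to be hardest is the inheritance in (v): proving that $X_U$ satisfies an upper $p$-estimate for disjoint blocks, and not merely on unit vectors. I would attempt it via the concatenation property. Disjoint blocks $u^1,\dots,u^m$ in $X_U$ are realized by taking near-optimal families in $\mathfrak{B}$, say $\{x^{(j)}_i\}$ with $\|\sum_i u^j_ix^{(j)}_i\|\approx\|u^j\|_{X_U}$, chosen disjoint across $j$. The hope is that this is possible using the symmetry from Theorem~\ref{Th_XL_XU_Prop}. Should the construction fail, the fallback is to cite the index characterization through equal-norm estimates together with the two-sided $\ell_p$ sandwiches.
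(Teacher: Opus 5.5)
Items (i)--(iv) are sound. They use Krivine's theorem in lattice form for the endpoint embeddings and the definitions for everything else. The halves $\delta(X_U)\le\delta(X)$ and $\sigma(X_L)\ge\sigma(X)$ of (v) are also fine, via $\phi_{X_U}(n)\ge n^{1/\delta(X)}$ and $\phi_{X_L}(n)\le n^{1/\sigma(X)}$.

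The gap is in the other half of (v). You never show that $X_U$ satisfies an upper $p$-estimate for $p<\delta(X)$, nor that $X_L$ satisfies a lower $q$-estimate for $q>\sigma(X)$. The sentence about blocks being ``sums of elements $\sum a_i\tilde x_i$'' is not an argument, since blocks in $X_U$ are sequences, not elements of $X$. Both routes in your last paragraph also fail:
\begin{itemize}
\item Concatenating near-optimal families $\{x^{(j)}_i\}$, disjoint across $j$, gives a single admissible family in the supremum defining $\|\sum_j u^j\|_{X_U}$. That yields only a \emph{lower} bound for this norm, which is the wrong direction.
\item The fallback cannot work in principle. For $1<\delta<\infty$ and $1\le r<\delta$, the space $E=\ell_{\delta,r}$ satisfies $\ell_p\hookrightarrow E\hookrightarrow\ell_\delta$ for every $p<\delta$ and has $\phi_E(n)=n^{1/\delta}$. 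Yet $\delta(E)=\min(\delta,r)=r<\delta$ (Example~\ref{Lorentz}). So two-sided $\ell_p$-sandwiches, and even exact knowledge of the fundamental function, do not determine the Grobler--Dodds index. You need estimates for arbitrary disjoint blocks.
\end{itemize}

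No near-optimal choice is needed for $X_U$; you only have to group an \emph{arbitrary} admissible family. Let $u^1,\dots,u^m$ be supported on disjoint sets $A_1,\dots,A_m\subset\{1,\dots,N\}$, and let $\{x_i\}_{i=1}^N\in\mathfrak{B}_N(X)$ be arbitrary. The elements $y_j=\sum_{i\in A_j}u^j_ix_i$ are pairwise disjoint, and $\|y_j\|_X\le\|u^j\|_{X_U(N)}\le\|u^j\|_{X_U}$ directly from the definition. Hence for $p<\delta(X)$
\[
\Big\|\sum_{i=1}^N\Big(\sum_{j=1}^m u^j_i\Big)x_i\Big\|_X=\Big\|\sum_{j=1}^m y_j\Big\|_X\le M^{[p]}(X)\Big(\sum_{j=1}^m\|u^j\|_{X_U}^p\Big)^{1/p}.
\]
Taking the supremum over all such families gives an upper $p$-estimate in $X_U$ with constant $M^{[p]}(X)$. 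This is exactly the mechanism behind Proposition~\ref{Prop_main}(i).

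For $X_L$ the argument is not a formal dual, because of the infimum over decompositions. It goes as follows:
\begin{itemize}
\item Take $\sum_j u^j=\sum_k a^k$ with $\sum_k\Phi_N(a^k)$ close to $\|\sum_j u^j\|_{X_L}$.
\item Apply the lower $q$-estimate of $X$ to the disjoint groups $\sum_{i\in A_j}a^k_ix_i$, then take the infimum over families. This gives $\big(\sum_j\Phi_N(a^k\chi_{A_j})^q\big)^{1/q}\le M_{[q]}(X)\,\Phi_N(a^k)$.
\item Sum over $k$ using Minkowski's inequality in $\ell_q$.
\item Finish with $\|u^j\|_{X_L}=\|u^j\|_{X_L(N)}\le\sum_k\Phi_N(a^k\chi_{A_j})$, using the Corollary following Theorem~\ref{Th_XL_XU_Prop}.
\end{itemize}
Equivalently, combine Proposition~\ref{Prop_main} with the embeddings $\|\cdot\|_{X_U}\le M^{[p]}(X)\|\cdot\|_{\ell_p}$ and $\|\cdot\|_{\ell_q}\le M_{[q]}(X)\|\cdot\|_{X_L}$ from your (i) and (ii).
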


\begin{example}
{\label{Lorentz} 
Let $1<p<\infty $, $1\leq q\le \infty $ and let $%
{\ell_{p,q}}$ be the Lorentz sequence spaces (see Section \ref{RI}).
It is well known that $\delta({\ell_{p,q}})=\min (p,q)$, $\sigma ({\ell_{p,q}})=\max (p,q)$%
, and moreover, that ${\ell_{p,q}}$ admits an upper $\delta({\ell_{p,q}})$-estimate and a
lower $\sigma ({\ell_{p,q}})$-estimate (see e.g. \cite[Theorem~3]{D01}).
Consequently, by Proposition \ref{coincidence with lp}, $({\ell_{p,q}})_{U}=\ell_{%
\min (p,q)}$ and $({\ell_{p,q}})_{L}=\ell_{\max (p,q)}$. }
\end{example}

{Here, we prove a compliment to Proposition \ref{coincidence with lp}, which characterizes Banach lattices $X$ that satisfy an {equal-norm}
upper (resp. lower) $p$-estimate in terms of embeddings of the optimal upper
space $X_{U}$ (resp. optimal lower space $X_{L}$). Observe that equal-norm
estimates were used in \cite{CwNiSc03} for the classification of
decomposable pairs of Banach function spaces. }

\begin{proposition}
{\label{Tsir4} Let $1<p<\infty $ and let $X$ be a Banach lattice. Then, we have: }

{(a) $X$ satisfies an equal-norm upper $p$-estimate with constant $C$
if and only if ${\ell_{p,1}}\overset{C}{\hookrightarrow } X_U$;}

(b) if $X_L\overset{C}{\hookrightarrow } {\ell_{p,\infty}}$, then $X$ satisfies an equal-norm lower $p$-estimate with constant $C$. 

Assume that $X$ is an order semi-continuous Banach function space. Then, $X$ satisfies an equal-norm lower $p$-estimate with constant $C$ if and only if $X_L\overset{C}{\hookrightarrow } {\ell_{p,\infty}}$.
\end{proposition}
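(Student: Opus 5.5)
For part (a) and part (b) I will use the fundamental functions of the symmetric spaces $X_U$ and $X_L$ (symmetric by Theorem \ref{Th_XL_XU_Prop}) together with the cited fact that $\phi_E(m)\le Cm^{1/p}$ gives $\ell_{p,1}\overset{C}{\hookrightarrow}E$, and $m^{1/p}\le C\phi_E(m)$ gives $E\overset{C}{\hookrightarrow}\ell_{p,\infty}$. By the Corollary after Theorem \ref{Th_XL_XU_Prop},
$$\phi_{X_U}(n)=\Big\|\sum_{k=1}^ne_k\Big\|_{X_U(n)}=\sup\Big\{\Big\|\sum_{k=1}^nx_k\Big\|_X:\{x_k\}\in\mathfrak B_n(X)\Big\}.$$
Hence an equal-norm upper $p$-estimate with constant $C$ is literally the statement $\phi_{X_U}(n)\le Cn^{1/p}$. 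This gives ``only if'' in (a) via the cited lemma. Conversely, if $\ell_{p,1}\overset{C}{\hookrightarrow}X_U$, then testing on $\sum_{k\le n}e_k$ gives $\|\sum x_k\|_X\le\phi_{X_U}(n)\le C\|\sum_{k\le n}e_k\|_{\ell_{p,1}}=Cn^{1/p}$.

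For (b) the argument is the same, but only one direction is cheap. For $\{x_k\}\in\mathfrak B_n(X)$ we have
$$n^{1/p}=\Big\|\sum_{k\le n}e_k\Big\|_{\ell_{p,\infty}}\le C\Big\|\sum_{k\le n}e_k\Big\|_{X_L}\le C\,\Phi_n(1,\dots,1)\le C\Big\|\sum_{k\le n}x_k\Big\|_X,$$
using that $\|\cdot\|_{X_L(n)}\le\Phi_n$ (trivial decomposition). The converse cannot be done this way. An equal-norm lower estimate only bounds $\Phi_n$ from below, and the decompositions in $\|\cdot\|_{X_L(n)}$ destroy this. A direct estimate $\Phi_n(d)\ge C^{-1}\sup_j j^{1/p}d_j^*$ (obtained from $\|\sum d_kx_k\|\ge d_j^*\|\sum_{k\le j}x_{\pi(k)}\|$) only yields a weak-type quasinorm, which is not subadditive. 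So I will dualize.

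For the converse under order semi-continuity I would use Proposition \ref{Prop-Dual.I}: $(X')_U\overset{1}{\hookrightarrow}(X_L)'$. For $a\in X_L$ and each $n$, by symmetry,
$$\sum_{k\le n}a_k^*\le\|a\|_{X_L}\Big\|\sum_{k\le n}e_k\Big\|_{(X_L)'}\le\|a\|_{X_L}\,\phi_{(X')_U}(n).$$
Therefore it suffices to show $\phi_{(X')_U}(n)\le Cn^{1/p'}$, i.e. that $X'$ satisfies an equal-norm upper $p'$-estimate with the same constant; then $\|a\|_{\ell_{p,\infty}}=\sup_n n^{1/p-1}\sum_{k\le n}a_k^*\le C\|a\|_{X_L}$.

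The remaining step, and the main obstacle, is the duality claim. Take disjoint normalized $v_k\in X'$ and $g\in X$ with $\|g\|_X\le1$, $g\ge0$, and let $g_k$ be the restriction of $g$ to $\operatorname{supp}v_k$. Then $\langle g,\sum v_k\rangle\le\sum_k\|g_k\|_X$. Ordering $s_j=\|g_k\|$ decreasingly and using positivity, $\|\sum_{k\le j}g_k\|\ge s_j\|\sum_{k\le j}g_k/\|g_k\|\|$, so $s_j\le Cj^{-1/p}$. This gives $\sum_k s_k\le C\sum_{j\le n}j^{-1/p}\le Cp'\,n^{1/p'}$, which already yields the equivalence with a constant depending on $p$.

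To recover exactly $C$, I would first try to exploit that the supremum defining $\phi_{(X')_U}(n)$ may be taken with $g$ having comparable pieces. Concretely, I would replace $g$ by the element $\sum_k\|g_k\|\,g_k/\|g_k\|$ and use an averaging argument over permutations of the block norms. Failing that, I would check whether the intended constant convention in the statement absorbs the factor $p'$.
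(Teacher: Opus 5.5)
\textbf{Parts (a) and the direct half of (b).} Your argument here is the paper's. Both reduce to $\phi_{X_U}(n)=\sup_{\mathfrak B_n(X)}\|\sum_k x_k\|_X$, to $\|\sum_{k\le n}e_k\|_{X_L}\le\Phi_n(1,\dots,1)$, and to the embedding criteria for $\ell_{p,1}$ and $\ell_{p,\infty}$.

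\textbf{The converse.} You follow the paper's duality route. The paper chains $X_L\overset{1}{\hookrightarrow}(X'')_L\overset{1}{\hookrightarrow}((X')_U)'$ with $\ell_{p',1}\overset{C}{\hookrightarrow}(X')_U$. This amounts to your use of $(X')_U\overset{1}{\hookrightarrow}(X_L)'$ from Proposition~\ref{Prop-Dual.I}. The only divergence is the input that $X'$ satisfies an equal-norm upper $p'$-estimate. The paper quotes this from the literature ``with the same constant'', whereas you prove it by hand and get $Cp'$. Against the statement as written, your converse therefore stops short of the constant $C$. Neither repair you suggest closes this: an abstract lattice has no permutation symmetry to average over, and reinterpreting the constant convention is not an argument.

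\textbf{The constant $C$ is not attainable.} The shortfall lies in the statement, not in your method: the converse with the same constant is false. Take $p=2$, $\beta=\sqrt2-1$, and the sequence lattice (Fatou, hence order semi-continuous)
\[
\|a\|_X=\max\bigl(|a_1|+\beta|a_2|,\;|a_2|\bigr)+\sum_{k\ge 3}|a_k| .
\]

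First, $X$ satisfies an equal-norm lower $2$-estimate with constant $1$. Let $x_1,\dots,x_n$ be disjoint and normalized.
\begin{itemize}
\item If at most one of them meets $\{1,2\}$, then $\|\sum_i x_i\|_X=n$.
\item Otherwise exactly two meet $\{1,2\}$, say in $se_1$ and $te_2$ with $0\le s,t\le 1$. Then $\|\sum_i x_i\|_X=\max(s+\beta t,t)+n-s-t$.
\item For $n=2$ this is at least $2-(1-\beta)t\ge 1+\beta=\sqrt2$.
\item For $n\ge3$ it is at least $n-\min(s,t)\ge n-1\ge\sqrt n$.
\end{itemize}

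Second, $X_L\overset{1}{\hookrightarrow}\ell_{2,\infty}$ fails. We have $\|(1-\beta)e_1+e_2\|_X=1$, so $\Phi_2(1,1-\beta)=\Phi_2(1-\beta,1)\le 1$. Splitting $(1,1)=\frac{1}{2-\beta}\bigl[(1,1-\beta)+(1-\beta,1)\bigr]$ gives
\[
\|e_1+e_2\|_{X_L}\le\frac{2}{3-\sqrt2}<\sqrt2=\|e_1+e_2\|_{\ell_{2,\infty}} .
\]
Equivalently, $\|e_1+e_2\|_{X'}=3-\sqrt2>\sqrt2$ while $\|e_1\|_{X'}=\|e_2\|_{X'}=1$. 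So the ``same constant'' duality invoked in the paper is not valid, and that step is exactly where its proof breaks.

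\textbf{What you have proved.} Your bound $X_L\overset{Cp'}{\hookrightarrow}\ell_{p,\infty}$ is the correct form of the converse. The direct route you discarded also gives it, and more generally. Your own estimate $\Phi_n(d)\ge C^{-1}\sup_j j^{1/p}d_j^*$ combines with $\|d\|_{\ell_{p,\infty}}\le p'\sup_j j^{1/p}d_j^*$. Since the $\ell_{p,\infty}$-norm is subadditive, passing to decompositions gives $\|a\|_{\ell_{p,\infty}}\le Cp'\|a\|_{X_L}$. This holds for every Banach lattice $X$, with no duality and no order semi-continuity.
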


\begin{proof}
{(a) Let ${\ell_{p,1}}\overset{C}{\hookrightarrow }X_{U}$. By the
definition of $X_{U}$ and the fact that the fundamental function $\phi_{{\ell_{p,1}}}$ of ${\ell_{p,1}}$ is equal to $n^{1/p}$, for every $n\in \mathbb{N}$ and $\left\{ u_{k}\right\} _{k=1}^{n}\in \mathfrak{B}%
_{n}\left( X\right) $, we have 
\begin{equation*}
\Big\|\sum_{k=1}^{n}u_{k}\Big\|_{X}\leq \Big\|\sum_{k=1}^{n}e_{k}\Big\|%
_{X_{U}}\leq C\Big\|\sum_{k=1}^{n}e_{k}\Big\|_{{\ell_{p,1}}}=Cn^{1/p},
\end{equation*}%
which implies that $X$ admits an equal-norm upper $p$-estimate with constant 
$C$ (see Section \ref{Ban latt}).}

{Conversely, assume that $X$ satisfies an equal-norm upper $p$%
-estimate with constant $C$. Then, for every $n\in\mathbb{N}$ and any $\varepsilon>0$
there is $\left\{ u_{k}\right\}_{k=1}^{n}\in\mathfrak{B}_{n}\left( X\right) $
such that 
\begin{equation*}
\phi_{X_U}(n)=\Big\|\sum_{k=1}^n e_k\Big\|_{X_U}\le (1+\varepsilon)\Big\|%
\sum_{k=1}^n u_k\Big\|_X\le C(1+\varepsilon)n^{1/p},\;\;n\in\mathbb{N}.
\end{equation*}
Hence, as was observed in Section \ref{RI}, it follows the embedding ${\ell_{p,1}}%
\overset{C}{\hookrightarrow } X_U$. }

{(b) By the definition of the $X_L$-norm, for any $n\in\mathbb{N}$ and $\left\{ u_{k}\right\}_{k=1}^{n}\in\mathfrak{B}_{n}\left( X\right) $,  we have 
\begin{equation*}
n^{1/p}=\Big\|\sum_{k=1}^n e_k\Big\|_{p,\infty}\le C\Big\|\sum_{k=1}^n e_k%
\Big\|_{X_L}\le C\Phi_n\Big(\sum_{k=1}^n e_k\Big)\le C\Big\|\sum_{k=1}^n u_k%
\Big\|_X.
\end{equation*}
Therefore, $X$ satisfies an equal-norm lower $p$-estimate with constant $C$. 
}

To complete the proof, it remains to show that the converse to assertion (b) holds  whenever $X$ is an order semi-continuous Banach function space.

{Let $X$ admit an equal-norm lower $p$-estimate. Since $X^{\prime }$ satisfies then an equal-norm upper $q$-estimate, $1/p+1/q=1$, with the same constant
(see, for instance, \cite[Lemma 2.1]{CwNiSc03}), by the part (a) of this proposition, we obtain that $\ell_{q,1}\overset{C}{\hookrightarrow }(X')_{U}$.
Hence, by duality (see Section \ref{RI}), we get 
\begin{equation*}
((X')_{U})^{\prime }\overset{C}{\hookrightarrow }(\ell_{q,1})^{\prime
}={\ell_{p,\infty}}.
\end{equation*}%
On the other hand, since $X$ is order semi-continuous, it is isometrically embedded as a sublattice into $X^{^{\prime \prime }}$. Therefore, $\mathfrak{B}_{n}\left( X\right)\subset \mathfrak{B}_{n}\left( X''\right)$, which implies that 
$X_{L}\overset{1}{\hookrightarrow }(X^{\prime \prime })_{L}$. Moreover, the dual space $X'$ is order semi-continuous, and so from Proposition \ref{Prop-Dual.I} it follows that $(X^{\prime \prime })_{L}\overset{1}{\hookrightarrow }((X')_{U})^{\prime }$.
Combining the last embeddings, we conclude that $X_{L}\overset{C}{%
\hookrightarrow }{\ell_{p,\infty}}$, and the proof is completed. }
\end{proof}

\subsection{\label{subsec-interpol-related}A
characterization of $L_p(\mu)$-spaces in terms of optimal sequence spaces.}
\label{Equal-norm}

One can easily see that if $X=L_{p}(\mu )$ for some $\sigma $%
-finite measure space $(T,\Sigma ,\mu )$ and $1\leq p<\infty $, then $%
X_{U}=X_{L}(={\ell_p})$. We show here that, under some conditions, the converse holds as well (for a similar result see \cite{CwNiSc03}).

\begin{theorem}
\label{Th-equal-optimal-space-class}
Let $X$ be an order continuous Banach lattice with the Fatou property such that $X_{U}=X_{L}$ with equivalence of norms.
Then $X$ is order isomorphic either to $L_{p}(\mu)$-space, for some $%
1\leq p<\infty $ and measure space $(T,\Sigma,\mu)$, or to $c_0(\Gamma)$, for some set $\Gamma$.
\end{theorem}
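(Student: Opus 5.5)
Set $p:=\delta(X)$. The plan is to show that $X_U=X_L$ forces $X$ to satisfy both an upper and a lower $p$-estimate with the same $p$, and then to invoke the classical characterization of abstract $L_p$-spaces and $c_0$.

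\emph{Step 1: identify $X_U$ and $X_L$ with a single $\ell_p$.} By Proposition \ref{coincidence with lp}(i),(ii) we have $X_U\overset{1}{\hookrightarrow}\ell_{\delta(X)}$ and $\ell_{\sigma(X)}\overset{1}{\hookrightarrow}X_L$. Since $X_U=X_L$, this gives $\ell_{\sigma(X)}\hookrightarrow\ell_{\delta(X)}$, so $\sigma(X)\le\delta(X)$. As always $\delta(X)\le\sigma(X)$, we get $\delta(X)=\sigma(X)=p$. Next, Proposition \ref{coincidence with lp}(i) gives $\ell_r\hookrightarrow X_U=X_L$ for every $r<p$, and (ii) gives $X_L\hookrightarrow\ell_q$ for every $q>p$. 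These alone do not pin $X_U$ down to $\ell_p$, so one more argument is needed. Combining $X_U\hookrightarrow\ell_p$ with $\ell_p\hookrightarrow X_L=X_U$ yields $X_U=X_L=\ell_p$ with equivalent norms. Proposition \ref{coincidence with lp}(iii),(iv) then says that $X$ admits an upper $p$-estimate and a lower $p$-estimate, with the same $p\in[1,\infty]$.

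\emph{Step 2: the case $p=\infty$.} Here the lower $\infty$-estimate is trivial and the upper $\infty$-estimate gives $\|\sum x_i\|\le M\max\|x_i\|$ for disjoint $x_i$. Together with $\max\|x_i\|\le\|\sum x_i\|$, this says disjoint sums behave like $c_0$. I would then argue that an order continuous lattice with this property is order isomorphic to $c_0(\Gamma)$. The natural route is to renorm to an equivalent lattice norm under which $X$ is an abstract $M$-space (an AM-space). Order continuity rules out $C(K)$-type pieces other than $c_0(\Gamma)$: an order continuous AM-space is atomic, and the Fatou property is not needed for this part.

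\emph{Step 3: the case $1\le p<\infty$.} Upper and lower $p$-estimates with constants $M^{[p]}$ and $M_{[p]}$ give
\[
M_{[p]}^{-1}\Big(\sum\|x_i\|^p\Big)^{1/p}\le\Big\|\sum x_i\Big\|\le M^{[p]}\Big(\sum\|x_i\|^p\Big)^{1/p}
\]
for disjoint $x_i$. A standard renorming result (Lindenstrauss--Tzafriri, Vol.~II, Section 1.f, or Kakutani--Bohnenblust--Nakano) then gives an equivalent lattice norm that is exactly $p$-additive on disjoint elements, i.e.\ an abstract $L_p$-space. One candidate is
\[
|||x|||:=\sup\Big\{\Big(\sum\|x_i\|^p\Big)^{1/p}:\ x=\sum x_i\ \text{disjoint}\Big\},
\]
possibly combined with the dual infimum. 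By Kakutani's representation theorem, an abstract $L_p$-space is order isometric to $L_p(\mu)$. Order continuity and the Fatou property are used here to ensure the renormed space is still complete, order continuous and $\sigma$-Dedekind complete, so that the representation applies; if needed one passes to a strictly localizable $\mu$.

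The main obstacle is Step 3, specifically producing an equivalent norm that is \emph{exactly} $p$-additive on disjoint elements and still a complete lattice norm. The supremum-type functional above is $p$-superadditive but need not satisfy the triangle inequality. I would first try the formula from LT Vol.~II, Theorem 1.f.?, or else the known theorem that a Banach lattice with both an upper and a lower $p$-estimate ($p<\infty$) is lattice isomorphic to an $L_p(\mu)$; order continuity handles the $\sigma$-finiteness and measure-representation issues.
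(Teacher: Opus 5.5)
Your Step 1 is correct and is a nice observation. Chaining $\ell_{\sigma(X)}\overset{1}{\hookrightarrow}X_L\hookrightarrow X_U\overset{1}{\hookrightarrow}\ell_{\delta(X)}$ forces $\delta(X)=\sigma(X)=p$ and $X_U=X_L=\ell_p$. Hence $C^{-1}(\sum_k\|u_k\|_X^p)^{1/p}\le\|\sum_k u_k\|_X\le C(\sum_k\|u_k\|_X^p)^{1/p}$ for disjoint $u_k$.

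\textbf{The gap.} Everything after Step 1 is unproved.

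\emph{Step 3.} Your appeal to Kakutani needs an equivalent lattice norm that is exactly $p$-additive on disjoint elements, and you never produce one. The candidate $|||\cdot|||$ does not deliver it. Apart from the triangle inequality, it is only $p$-superadditive on disjoint elements. If you split the pieces of a disjoint decomposition of $|x+y|$ along the bands of $x$ and $y$, you only get $|||x+y|||^p\le C^p(|||x|||^p+|||y|||^p)$, not equality. Pointing to an unspecified ``Theorem 1.f.?'' does not close this. The general results of that section turn upper/lower $p$-estimates into $r$-convexity and $s$-concavity only for $r<p<s$, which does not produce an abstract $L_p$-space.

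\emph{Step 2.} This has the same defect. An upper $\infty$-estimate on disjoint elements is not an AM-norm, and you give no renorming that makes it one.

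\textbf{The missing idea.} No renorming is needed at all. Your two-sided estimate already says that $X$ is \emph{decomposable}. If $(x_n)$ and $(y_n)$ are disjoint sequences with $\|y_n\|_X\le\|x_n\|_X$, then $\|\sum_n y_n\|_X\le C^2\|\sum_n x_n\|_X$. This holds first for finite sums, and then for infinite ones by the Fatou property, which is where that hypothesis enters. For an order continuous Banach lattice, decomposability is exactly the hypothesis of the isomorphic Bohnenblust-type theorem of Lindenstrauss--Tzafriri, Vol.~II, Theorem~1.b.12. That theorem yields $L_p(\mu)$ or $c_0(\Gamma)$ directly, covering $p<\infty$ and $p=\infty$ uniformly. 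Order continuity is used essentially there: without it, e.g.\ $\ell_\infty$ would be a counterexample in the case $p=\infty$.

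This is the paper's proof. It reads decomposability off directly from
\[
\|a\|_{X_L}\le\Big\|\sum_k u_k\Big\|_X\le\|a\|_{X_U}\le C\|a\|_{X_L},\qquad a=(\|u_k\|_X)_k .
\]
So your identification of $p$, while correct and informative, is not needed. Replacing your Steps 2--3 by this observation and citation closes the argument.
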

\begin{proof}
Let $\{u_{n}\}_{n=1}^{\infty }$ be a sequence of pairwise disjoint elements  from $X$. Then, from the definition of optimal upper and lower spaces it follows that for every $n\in \mathbb{N}$
$$
\Big\|\sum_{k=1}^n \|u_k\|_X e_{k}\Big\|_{X_L}\le \Big\|\sum_{k=1}^n u_k\Big\|_X\le \Big\|\sum_{k=1}^n \|u_k\|_X e_{k}\Big\|_{X_U}.
$$
Since by the assumption $X_U=X_L$, there exists a constant $C>0$ such that for all $n\in\mathbb{N}$ and  $(a_k)_{k=1}^n\in X_L$ it holds
$$
\Big\|\sum_{k=1}^n a_ke_{k}\Big\|_{X_U}\le C\Big\|\sum_{k=1}^n a_ke_{k}\Big\|_{X_L}.
$$
Combining this inequality with the preceding one and using the Fatou property, we conclude that for each sequence $\{u_{k}\}_{k=1}^{\infty}$ of  pairwise disjoint elements from $X$ we get 
$$
\Big\|\sum_{k=1}^\infty \|u_k\|_X e_{k}\Big\|_{X_L}\le \Big\|\sum_{k=1}^\infty u_k\Big\|_X\le C\Big\|\sum_{k=1}^\infty \|u_k\|_X e_{k}\Big\|_{X_L}
$$
This inequality means that the space $X$ is {\it decomposable} with
constant $C$\footnote{This notion was introduced by Cwikel in \cite{Cwi84} in connection with the interpolation theory of operators; see also \cite{AN1}.}, i.e., the assumption that sequences $\{x_{n}\}_{n=1}^{\infty
}$ and $\{y_{n}\}_{n=1}^{\infty }$ of pairwise disjoint elements from $X$ satisfy the conditions: $\sum_{n=1}^{\infty }x_{n}\in X$
and $\left\Vert y_{n}\right\Vert _{X}\leq \left\Vert x_{n}\right\Vert _{X}$, 
$n\in N$, implies that $\sum_{n=1}^{\infty }y_{n}\in X$ and 
\begin{equation*}  \label{reldec}
\left\Vert \sum_{n=1}^{\infty }y_{n}\right\Vert _{X}\leq C\left\Vert
\sum_{n=1}^{\infty }x_{n}\right\Vert _{X}.
\end{equation*}%
Since $X$ is order continuous, applying now \cite[Theorem 1.b.12]{LT79}, we complete the proof.

\end{proof}

\subsection{\label{subsec-interpol-related}A characterization of optimal sequence spaces.}
\label{Equal-norm}

Let $E$ be an order semi-continuous Banach sequence lattice. According to \cite[Proposition 3.8(a)]{AN1}, $E\overset{1}{\hookrightarrow }E_{L}$ and $E_{U}\overset{1}{\hookrightarrow }E$.
In this section, we will be interested in identification of Banach sequence lattices $E$ such that the opposite embeddings hold. In other words, we intend to find conditions, under which $E=X_{L}$ or $E=X_{U}$ for some Banach lattice $X$. Observe that, in view of Theorem \ref{Th_XL_XU_Prop} and Proposition \ref{Prop-XL-XU-Fatou}, this is possible only if $E$ is order isomorphic to a symmetric sequence space with the Fatou property. 

\begin{proposition}
\label{Tsir1a} Let $E$ be a Banach sequence lattice satisfying the following property: there exists a constant $C>0$ such that for every  $n\in \mathbb{N}$, $a_k\in \mathbb{R}$, $k=1,\dots,n$, $\left\{ u_{k}\right\}_{k=1}^{n}\in\mathfrak{B}_{n}\left( E\right) $ there is an injective mapping $\pi:\,\{1,\dots,n\}\to \mathbb{N}$ such that  
\begin{equation}
\label{add1}
\Big\|\sum_{k=1}^n a_ke_{\pi(k)}\Big\|_E\le C\Big\|\sum_{k=1}^n a_ku_k\Big\|_E.
\end{equation}

Then, $E_L$ is the minimal symmetric space with the Fatou property that contains $E$. More precisely, if $F$ is a symmetric space with the Fatou property such that $E\overset{C_1}{\hookrightarrow} F$, then the embedding $E_L\overset{CC_1}{\hookrightarrow} F$ holds.
\end{proposition}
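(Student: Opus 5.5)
Since $E_L$ is the space built from the norms $\|\cdot\|_{E_L(n)}$, which are infima over decompositions of sums of the functionals $\Phi_n$, the natural plan is to bound $\Phi_n$ from below by the $F$-norm and then pass to the convex hull and to the limit in $n$. The claim that $E_L$ is the minimal such space also needs the facts already established: $E_L$ is symmetric (Theorem \ref{Th_XL_XU_Prop}), it has the Fatou property (Proposition \ref{Prop-XL-XU-Fatou}), and $E\overset{1}{\hookrightarrow }E_L$ by \cite[Proposition 3.8(a)]{AN1} (granting order semi-continuity of $E$, or at least the inequality $\|a\|_{E_L}\le \|a\|_E$ for finitely supported $a$). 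So it remains to prove the embedding $E_L\overset{CC_1}{\hookrightarrow} F$ for every symmetric $F$ with the Fatou property and $E\overset{C_1}{\hookrightarrow} F$.

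\emph{Step 1 (lower bound for $\Phi_n$).} Fix $n$, $a=(a_k)_{k=1}^n$ and $\{u_k\}_{k=1}^n\in\mathfrak{B}_n(E)$. Hypothesis \eqref{add1} provides an injection $\pi$ with $\|\sum_k a_ke_{\pi(k)}\|_E\le C\|\sum_k a_ku_k\|_E$. Since $F$ is symmetric, the $F$-norm of a finitely supported vector depends only on its nonincreasing rearrangement, so $\|\sum_k a_ke_k\|_F=\|\sum_k a_ke_{\pi(k)}\|_F$. Hence
\[
\Big\|\sum_{k=1}^n a_ke_k\Big\|_F\le C_1\Big\|\sum_{k=1}^n a_ke_{\pi(k)}\Big\|_E\le CC_1\Big\|\sum_{k=1}^n a_ku_k\Big\|_E .
\]
Taking the infimum over $\mathfrak{B}_n(E)$ gives $\|a\|_F\le CC_1\Phi_n(a)$, where $a$ is regarded as a sequence supported in $\{1,\dots,n\}$.

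\emph{Step 2 (convexification).} For any decomposition $a=\sum_{k\in A}a^k$ with $|A|<\infty$, the triangle inequality in $F$ and Step 1 give
\[
\|a\|_F\le\sum_{k\in A}\|a^k\|_F\le CC_1\sum_{k\in A}\Phi_n(a^k).
\]
Taking the infimum over all such decompositions yields $\|a\|_F\le CC_1\|a\|_{E_L(n)}$.

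\emph{Step 3 (passage to infinite sequences).} For $a\in E_L$, apply Step 2 to the truncations $a^{(n)}=(a_1,\dots,a_n,0,\dots)$. This gives $\|a^{(n)}\|_F\le CC_1\|a\|_{E_L}$ for every $n$. Since $|a^{(n)}|\uparrow|a|$ coordinatewise and $F$ has the Fatou property, it follows that $a\in F$ and $\|a\|_F\le CC_1\|a\|_{E_L}$.

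The argument is essentially routine. The only delicate point is Step 1, where two things must be checked: that symmetry of $F$ legitimately removes the injection $\pi$, and that the hypothesis is formulated with the same $\mathfrak{B}_n(E)$ used in defining $\Phi_n$, which it is. Steps 2 and 3 go through as written.
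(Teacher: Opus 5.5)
Your proof is correct and follows the paper's argument essentially step for step. You bound $\Phi_n$ from below by the $F$-norm using \eqref{add1}, the embedding $E\hookrightarrow F$ and symmetry of $F$ to discard $\pi$, then pass to decompositions via the triangle inequality in $F$ and to infinite sequences via the Fatou property of $F$; the only cosmetic difference is that the paper uses an $\varepsilon$-optimal decomposition instead of taking the infimum directly, and, like you, it relies on \cite{AN1} rather than its own proof for the inclusion $E\hookrightarrow E_L$ behind the word ``minimal''.
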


\begin{proof}
 Let $n\in \mathbb{N}$ and $a=(a_k)_{k=1}^n$ be arbitrary. From \eqref{add1} and the definition of the functional $\Phi_n(\cdot)$ it follows that 
$$
\Phi_n(a)\ge \frac1C\inf_{\pi}\Big\|\sum_{k=1}^n a_ke_{\pi(k)}\Big\|_E,$$
where the infimum is taken over all injective mappings $\pi:\,\{1,\dots,n\}\to \mathbb{N}$. 
Therefore, assuming that $F$ is a symmetric space satisfying $E\overset{C_1}{\hookrightarrow} F$, for every $\varepsilon>0$ and some representation $a=\sum_{i\in A}a^i$, where $a^i=(a^i_k)$ and $|A|<\infty$, we obtain
\begin{eqnarray*}
\|a\|_{E_L}&\ge&  (1-\varepsilon)\sum_{i\in A}\Phi_n(a^i)\ge \frac{1-\varepsilon}{C} \sum_{i\in A}\inf_{\pi}\Big\|\sum_{k=1}^n a_k^ie_{\pi(k)}\Big\|_E\\&\ge& \frac{(1-\varepsilon)}{CC_1} \sum_{i\in A}\inf_{\pi}\Big\|\sum_{k=1}^n a_k^ie_{\pi(k)}\Big\|_F\\&=&\frac{(1-\varepsilon)}{CC_1} \sum_{i\in A}\Big\|\sum_{k=1}^n a_k^ie_{k}\Big\|_F\\&\ge&\frac{(1-\varepsilon)}{CC_1} \Big\|\sum_{k=1}^n\sum_{i\in A} a_k^ie_{k}\Big\|_F\\&=&\frac{(1-\varepsilon)}{CC_1}\|a\|_{F}.
\end{eqnarray*}
Since $F$ has the Fatou property and $\varepsilon>0$ is arbitrary, this implies that $E_L\overset{CC_1}{\hookrightarrow} F$. Hence, the proof is completed.
\end{proof}

The proof of the next result follows by the same lines; so we skip it. 


\begin{proposition}
\label{Tsir1aX_U} Let a Banach sequence lattice $E$ satisfy the following property: there exists a constant $C>0$ such that for every  $n\in \mathbb{N}$, $a_k\in \mathbb{R}$, $k=1,\dots,n$, $\left\{ u_{k}\right\}_{k=1}^{n}\in\mathfrak{B}_{n}\left( E\right) $ there is an injective mapping $\pi:\,\{1,\dots,n\}\to \mathbb{N}$ such that  
\begin{equation}
\label{add2a}
\Big\|\sum_{k=1}^n a_ku_k\Big\|_E\le C\Big\|\sum_{k=1}^n a_ke_{\pi(k)}\Big\|_E.
\end{equation}

Then, $E_U$ is the maximal symmetric space that is contained into $E$. More precisely, if $F$ is a symmetric space such that $F\overset{C_1}{\hookrightarrow} E$, then the embedding $F\overset{CC_1}{\hookrightarrow} E_U$ holds.
\end{proposition}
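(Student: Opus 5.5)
We argue in direct analogy with the proof of Proposition \ref{Tsir1a}, the roles of supremum and infimum now being interchanged. Let $F$ be a symmetric sequence space with $F\overset{C_1}{\hookrightarrow} E$. Fix $n\in\mathbb{N}$ and $a=(a_k)_{k=1}^n\in\mathbb{R}^n$. Since $\|a\|_{F_U}$ on finitely supported vectors is computed through $X_U(n)$, we aim to prove
\begin{equation*}
\Big\|\sum_{k=1}^n a_ku_k\Big\|_E\le CC_1\Big\|\sum_{k=1}^n a_ke_k\Big\|_F
\end{equation*}
for every $\{u_k\}_{k=1}^n\in\mathfrak{B}_n(E)$. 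Taking the supremum over $\mathfrak{B}_n(E)$ then gives $\|(a_k)_{k=1}^n\|_{E_U(n)}\le CC_1\|(a_k)_{k=1}^n\|_F$.

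The key step uses hypothesis \eqref{add2a}. For the given $\{u_k\}$ it yields an injective $\pi:\{1,\dots,n\}\to\mathbb{N}$ with $\|\sum a_ku_k\|_E\le C\|\sum a_ke_{\pi(k)}\|_E$. The embedding then gives
\begin{equation*}
\Big\|\sum_{k=1}^n a_ke_{\pi(k)}\Big\|_E\le C_1\Big\|\sum_{k=1}^n a_ke_{\pi(k)}\Big\|_F .
\end{equation*}
Since $F$ is symmetric, $\sum a_ke_{\pi(k)}$ and $\sum a_ke_k$ have the same nonincreasing rearrangement, so $\|\sum a_ke_{\pi(k)}\|_F=\|\sum a_ke_k\|_F$. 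Chaining these gives the displayed inequality. Note that, unlike Proposition \ref{Tsir1a}, there is no infimum over decompositions to handle here, because the $E_U(n)$-norm is a plain supremum. This is also why no Fatou property of $F$ is needed.

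Finally, pass from finite sections to the whole space. For any $a\in F$ we have $\|(a_k)_{k=1}^n\|_{E_U(n)}\le CC_1\|(a_k)_{k=1}^n\|_F\le CC_1\|a\|_F$, since $F$ is a lattice and truncation decreases the norm. Taking $\sup_n$ in the definition of $\|\cdot\|_{E_U}$ gives $a\in E_U$ with $\|a\|_{E_U}\le CC_1\|a\|_F$, that is, $F\overset{CC_1}{\hookrightarrow}E_U$.

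To justify the description ``maximal symmetric space contained in $E$'', we also need $E_U\overset{1}{\hookrightarrow}E$ and that $E_U$ is symmetric. These follow from \cite[Proposition 3.8(a)]{AN1} (valid when $E$ is order semi-continuous) and from Theorem \ref{Th_XL_XU_Prop}, respectively. The only point requiring care is this contextual remark, namely that $E_U\subset E$ when $E$ lacks order semi-continuity. The quantitative embedding statement needs nothing beyond the chain above.
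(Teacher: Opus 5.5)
Your proof is correct and is exactly the ``same lines'' adaptation of the proof of Proposition \ref{Tsir1a} that the paper intends but omits: hypothesis \eqref{add2a} and the symmetry of $F$ bound each $\|\sum_{k=1}^n a_ku_k\|_E$ by $CC_1\|\sum_{k=1}^n a_ke_k\|_F$, and taking suprema over $\mathfrak{B}_n(E)$ and then over $n$ gives $F\overset{CC_1}{\hookrightarrow}E_U$. You also rightly observe that, unlike the lower case, no Fatou property of $F$ is needed, since truncation only decreases the $F$-norm; note the small slip that $\|a\|_{F_U}$ and $X_U(n)$ in your first paragraph should read $\|a\|_{E_U}$ and $E_U(n)$.
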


Let $X$ be a Banach lattice. As was proved in \cite{AN1}, $X_U$ (resp. $X_L$) is an upper (resp. lower) semi-homogeneous space (see Section \ref{Lower/upper semi-homogeneous}). More precisely, since the spaces $X_U$ and $X_L$ are symmetric, we have the following.


\begin{proposition}\cite[Proposition 3.7]{AN1}
\label{Prop_main} 
For every Banach lattice $X$ we have the following:

$\left( i\right)$. For any $m\in\mathbb{N}$ and every pairwise disjoint elements $u_{k}\in X_U$, $k=1,2,\dots,m$, we have 
\begin{equation*}
\left\Vert \sum_{k=1}^{m}u_{k}\right\Vert _{X_{U}}\leq \left\Vert
\sum_{k=1}^{m}\left\Vert u_{k}\right\Vert _{X_{U}}e_{k}\right\Vert _{X_{U}};
\end{equation*}

$\left( ii\right)$. For any $m\in\mathbb{N}$ and every pairwise disjoint elements $u_{k}\in X_L$, $k=1,2,\dots,m$, we have
\begin{equation*}
\left\Vert \sum_{k=1}^{m}\left\Vert u_{k}\right\Vert
_{X_{L}}e_{k}\right\Vert _{X_{L}}\leq \left\Vert
\sum_{k=1}^{m}u_{k}\right\Vert _{X_{L}}.
\end{equation*}
\end{proposition}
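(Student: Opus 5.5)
Both inequalities have the same shape. The idea is to compare the given disjoint family in $X_U$ (resp. $X_L$) with the defining families $\mathfrak{B}_n(X)$, using that a normalized disjoint family can be "substituted" into a disjoint family in $X$. We may assume $\|u_k\|\ne0$ and, by the Fatou property (Proposition \ref{Prop-XL-XU-Fatou}) and symmetry (Theorem \ref{Th_XL_XU_Prop}), that all $u_k$ are finitely supported and nonnegative. Write $u_k=\|u_k\|\,v_k$ with disjoint supports $S_k\subset\{1,\dots,N\}$ and $\|v_k\|=1$.

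\emph{Part (i).} Fix $\{x_j\}_{j=1}^N\in\mathfrak{B}_N(X)$; we must bound $\|\sum_j a_jx_j\|_X$ with $a=\sum_k u_k$. Group the indices: $a_jx_j$ summed over $j\in S_k$ equals $\|u_k\|\,y_k$, where $y_k:=\sum_{j\in S_k}(v_k)_jx_j$. By the definition of the $X_U$-norm, and because the finite support lets us use the corollary after Theorem \ref{Th_XL_XU_Prop}, we have $\|y_k\|_X\le\|v_k\|_{X_U}=1$. The $y_k$ are pairwise disjoint. If $y_k\neq0$, put $z_k=y_k/\|y_k\|_X$, so that $\|\sum_k\|u_k\|\,\|y_k\|\,z_k\|_X\le\|(\|u_k\|\,\|y_k\|)_k\|_{X_U}\le\|(\|u_k\|)_k\|_{X_U}$. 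The last step uses the lattice monotonicity of $X_U$, which holds by symmetry; if some $y_k=0$, we drop it, and monotonicity again handles this. Taking the supremum over $\{x_j\}$ gives (i).

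\emph{Part (ii).} This is the main obstacle, because the $X_L$-norm is an infimum of decompositions. First reduce to $\Phi$. Take a near-optimal decomposition $a=\sum_{i\in A}a^i$ with $\|a\|_{X_L}\ge\sum_i\Phi_N(a^i)-\varepsilon$, and restrict each $a^i$ to the blocks: $a^i=\sum_k a^i\chi_{S_k}$. Since $\sum_i a^i\chi_{S_k}=u_k$, we get $\|u_k\|_{X_L}\le\sum_i\Phi_N(a^i\chi_{S_k})$. Thus it suffices to show, for a single vector $b$ (one of the $a^i$, supported in $\bigcup S_k$),
\[
\Big\|\sum_k \Phi_N(b\chi_{S_k})\,e_k\Big\|_{X_L}\le\Phi_N(b),
\]
with $\Phi$ taken over the relevant supports. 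Granting this, the triangle inequality in $X_L$ and monotonicity give $\|\sum_k\|u_k\|_{X_L}e_k\|_{X_L}\le\sum_i\Phi_N(a^i)\le\|a\|_{X_L}+\varepsilon$.

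For the displayed claim, fix $\{x_j\}\in\mathfrak{B}_N(X)$ and set $y_k=\sum_{j\in S_k}b_jx_j$, which are disjoint. Normalizing the nonzero $y_k$ gives an element of $\mathfrak{B}_m(X)$, so $\|\sum_j b_jx_j\|_X=\|\sum_k\|y_k\|\,(y_k/\|y_k\|)\|_X\ge\Phi_m((\|y_k\|)_k)\ge\|(\|y_k\|)_k\|_{X_L}$. Also $\|y_k\|_X\ge\Phi(b\chi_{S_k})$, by the definition of $\Phi$ applied to the normalized family $\{x_j\}_{j\in S_k}$. Monotonicity of $X_L$ then yields $\|\sum_j b_jx_j\|_X\ge\|\sum_k\Phi(b\chi_{S_k})e_k\|_{X_L}$, and taking the infimum over $\{x_j\}$ proves the claim.

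The delicate points are bookkeeping: when some $y_k=0$ we need padding (which is harmless because of monotonicity), and we need that $\Phi$ on a sub-block does not depend on the ambient dimension, which follows from the corollary after Theorem \ref{Th_XL_XU_Prop}.
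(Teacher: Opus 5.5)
Your proof is correct. The paper does not prove this statement itself. It quotes it from \cite{AN1} and adds only that the symmetry of $X_U$ and $X_L$ (Theorem \ref{Th_XL_XU_Prop}) is what allows the clean form with $e_1,\dots,e_m$ on the right-hand side. You give a self-contained derivation from the definitions. For (i) you group a test family $\{x_j\}\in\mathfrak{B}_N(X)$ into the disjoint blocks $y_k$. For (ii) you split a near-optimal decomposition $a=\sum_i a^i$ blockwise and reduce to $\|\sum_k\Phi_N(b\chi_{S_k})e_k\|_{X_L}\le\Phi_N(b)$. Symmetry, through the Corollary after Theorem \ref{Th_XL_XU_Prop}, enters your argument exactly where the paper's remark says it should.

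A few points of bookkeeping.

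\textbf{Part (i).} Discarding a vanishing $y_k$ really is cheap. The inequality $\|(c_k)_{k\in K'}\|_{X_U(m')}\le\|\sum_{k\in K'}c_ke_k\|_{X_U}$ follows from the definition of the $X_U$-norm as a supremum together with permutation invariance, and then monotonicity finishes. The bound $\|y_k\|_X\le\|v_k\|_{X_U}$ is immediate from the definition and needs no corollary.

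\textbf{Part (ii).} Here the corresponding steps go the other way, and monotonicity is not what makes them harmless. From $\Phi_{m'}\bigl((\|y_k\|)_{k\in K'}\bigr)\ge\|(\|y_k\|)_{k\in K'}\|_{X_L(m')}$ you must reach $\|\sum_{k\in K'}\|y_k\|e_k\|_{X_L}$, and this is needed even when $K'=\{1,\dots,m\}$. Similarly, you use $\|u_k\|_{X_L}\le\|u_k\|_{X_L(N)}$. The $X_L(n)$-norm of a zero-padded vector is nondecreasing in $n$, so these inequalities do not follow from the definitions or from monotonicity. They are exactly the Corollary after Theorem \ref{Th_XL_XU_Prop}, combined with permutation invariance to move $K'$ to the front.

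That corollary is a statement about the norms $\|\cdot\|_{X_L(n)}$, not about $\Phi_n$. If you work with $\Phi_N$ throughout, you get $\|y_k\|_X\ge\Phi_N(b\chi_{S_k})$ directly, and no dimension-independence of $\Phi$ itself is needed.

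Finally, the pieces $a^i$ need not vanish off $\bigcup_k S_k$. Replacing $a^i$ by $a^i\chi_{\bigcup_k S_k}$ keeps $\sum_i a^i=a$ and does not increase $\Phi_N$. Alternatively, disjointness gives $\|\sum_j b_jx_j\|_X\ge\|\sum_k y_k\|_X$, which is the direction you need.
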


Applying Propositions \ref{Prop_main}, \ref{Tsir1a} and \ref{Tsir1aX_U}, we get the following characterization of symmetric sequence spaces which can be represented as the lower (resp. upper) optimal space of some Banach lattice.

\begin{theorem}
 \label{add cor1} 
 Let $E$ be a symmetric sequence space with the Fatou property. Then, we have:
 
(a) $E=X_L$ for some Banach lattice $X$ (equivalently, $E=E_L$) if and only if $E$ is lower semi-homogeneous, i.e., there is a constant $C>0$ such that the inequality 
\begin{equation}
\label{add1new}
\Big\|\sum_{k=1}^n a_ke_{k}\Big\|_E\le C\Big\|\sum_{k=1}^n a_ku_k\Big\|_E
\end{equation}
holds for every  $n\in \mathbb{N}$, $a_k\in \mathbb{R}$, $k=1,\dots,n$, and $\left\{ u_{k}\right\}_{k=1}^{n}\in\mathfrak{B}_{n}\left( E\right)$;

(b) $E=X_U$ for some Banach lattice $X$ (equivalently, $E=E_U$) if and only if $E$ is upper semi-homogeneous, i.e.,there is a constant $C>0$ such that the inequality 
\begin{equation}
\label{add2anew}
\Big\|\sum_{k=1}^n a_ku_k\Big\|_E\le C\Big\|\sum_{k=1}^n a_ke_{k}\Big\|_E
\end{equation}
holds for every  $n\in \mathbb{N}$, $a_k\in \mathbb{R}$, $k=1,\dots,n$, and $\left\{ u_{k}\right\}_{k=1}^{n}\in\mathfrak{B}_{n}\left( E\right)$.
\end{theorem}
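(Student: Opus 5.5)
We treat part (a); part (b) is symmetric, using Proposition \ref{Tsir1aX_U} and Proposition \ref{Prop_main}(i) in place of Proposition \ref{Tsir1a} and Proposition \ref{Prop_main}(ii). The argument is a cycle of three implications: $E=X_L$ for some $X$ $\Rightarrow$ $E$ is lower semi-homogeneous $\Rightarrow$ $E=E_L$ $\Rightarrow$ $E=X_L$ for some $X$. The last implication is trivial: take $X=E$.

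\emph{First implication.} Assume $E=X_L$ with equivalence constant $D$, say $D^{-1}\|a\|_{X_L}\le\|a\|_E\le D\|a\|_{X_L}$. Take $\{u_k\}_{k=1}^n\in\mathfrak{B}_n(E)$. Each $u_k$ lies in $X_L$ with $D^{-1}\le\|u_k\|_{X_L}\le D$. By Proposition \ref{Prop_main}(ii), applied to the disjoint elements $a_ku_k\in X_L$,
\[
\Big\|\sum_{k=1}^n |a_k|\,\|u_k\|_{X_L}e_k\Big\|_{X_L}\le\Big\|\sum_{k=1}^n a_ku_k\Big\|_{X_L}.
\]
Since $X_L$ is a Banach lattice and $\|u_k\|_{X_L}\ge D^{-1}$, the left-hand side is at least $D^{-1}\|\sum_k a_ke_k\|_{X_L}$. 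Passing back to $E$-norms costs a further factor $D^2$, so \eqref{add1new} holds with $C=D^4$.

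\emph{Second implication.} Assume \eqref{add1new} holds. Then hypothesis \eqref{add1} of Proposition \ref{Tsir1a} is satisfied with $\pi=\mathrm{id}$. Apply that proposition with $F=E$: this is allowed because $E$ is symmetric with the Fatou property, and $E\overset{1}{\hookrightarrow}E$. It gives $E_L\overset{C}{\hookrightarrow}E$. Combined with $E\overset{1}{\hookrightarrow}E_L$ from \cite[Proposition 3.8(a)]{AN1}, which applies since the Fatou property implies order semi-continuity, we get $E=E_L$.

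The step needing most care is the first implication: \eqref{add1new} is stated for the normalized elements of $\mathfrak{B}_n(E)$, while Proposition \ref{Prop_main} normalizes in $X_L$, so we must keep track of how norms transfer under the isomorphism $E\simeq X_L$. This is handled by the lattice monotonicity argument above.

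For (b) the same cycle applies. Let $E=X_U$ with constant $D$ and $\{u_k\}\in\mathfrak{B}_n(E)$. Proposition \ref{Prop_main}(i) gives
\[
\Big\|\sum_{k=1}^n a_ku_k\Big\|_{X_U}\le\Big\|\sum_{k=1}^n |a_k|\,\|u_k\|_{X_U}e_k\Big\|_{X_U}.
\]
Since $\|u_k\|_{X_U}\le D$, the right-hand side is at most $D\|\sum_k a_ke_k\|_{X_U}$, and we obtain \eqref{add2anew} with $C=D^4$. Conversely, \eqref{add2anew} yields \eqref{add2a} with $\pi=\mathrm{id}$. Proposition \ref{Tsir1aX_U} with $F=E$ then gives $E\overset{C}{\hookrightarrow}E_U$, which together with $E_U\overset{1}{\hookrightarrow}E$ shows $E=E_U$.
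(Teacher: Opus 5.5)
Your proof is correct and follows the route the paper indicates. Proposition \ref{Prop_main} gives the necessity of semi-homogeneity, and Propositions \ref{Tsir1a} and \ref{Tsir1aX_U} (with $\pi=\mathrm{id}$ and $F=E$), combined with the embeddings $E\overset{1}{\hookrightarrow}E_L$ and $E_U\overset{1}{\hookrightarrow}E$ from \cite[Proposition 3.8(a)]{AN1}, give sufficiency; your bookkeeping actually yields $C=D^3$, though $D^4$ is of course also valid.
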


Combining the latter result with Proposition \ref{coincidence with lp} (see also \cite[Proposition 3.9(i),(ii)]{AN1}), we get the following embeddings for Banach sequence lattices which satisfy either \eqref{add1new} or \eqref{add2anew}. 

\begin{corollary}
\label{Tsir6}

(a) Assume that $E$ be a symmetric sequence space with the Fatou property such that for every  $n\in \mathbb{N}$, $a_k\in \mathbb{R}$, $k=1,\dots,n$, and $\left\{ u_{k}\right\}_{k=1}^{n}\in\mathfrak{B}_{n}\left( E\right)$ inequality \eqref{add1new} holds. Then, 
$l_{\sigma(E)}\overset{1}{\hookrightarrow }E$ and $E{\hookrightarrow } {\ell_q}$ for each $q>\sigma(E)$.

(b) Assume that $E$ be a symmetric sequence space such that for every  $n\in \mathbb{N}$, $a_k\in \mathbb{R}$, $k=1,\dots,n$, and $\left\{ u_{k}\right\}_{k=1}^{n}\in\mathfrak{B}_{n}\left( E\right)$ inequality \eqref{add2anew} holds. Then,
$E\overset{1}{\hookrightarrow }l_{\delta(E)}$ and ${\ell_q}{\hookrightarrow } E$ for each $q<\delta(E)$.

\end{corollary}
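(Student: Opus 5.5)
The plan is to transfer Proposition \ref{coincidence with lp} from the optimal spaces $E_L$, $E_U$ to $E$ itself, using Theorem \ref{add cor1} to identify $E$ with $E_L$ (resp. $E_U$). The second embeddings in (a) and (b) come out cleanly. The norm-one embeddings are where I expect the only real difficulty. I will do (a) in detail; (b) is the mirror image.

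\emph{Part (a), second embedding.} By Theorem \ref{add cor1}(a), inequality \eqref{add1new} gives $E=E_L$ with equivalent norms. Apply Proposition \ref{coincidence with lp}(ii) to the lattice $X=E$: this gives $E_L\hookrightarrow \ell_q$ for every $q>\sigma(E)$. The definition of $\|\cdot\|_{E_L(n)}$ yields $\Phi_n(a)\le \|\sum_{k=1}^n a_ke_k\|_E$ (take $u_k=e_k$), so $E\overset{1}{\hookrightarrow}E_L$. Composing, $E\hookrightarrow\ell_q$ for every $q>\sigma(E)$, with no loss.

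\emph{Part (a), norm-one embedding $\ell_{\sigma(E)}\overset{1}{\hookrightarrow}E$.} Proposition \ref{coincidence with lp}(ii) gives $\ell_{\sigma(E)}\overset{1}{\hookrightarrow}E_L$, but $E_L$ and $E$ agree only up to the constant $C$ of \eqref{add1new}. Proposition \ref{Tsir1a} with $F=E$ only gives $E_L\overset{C}{\hookrightarrow}E$. So the identification alone yields $\ell_{\sigma(E)}\overset{C}{\hookrightarrow}E$, and I must genuinely remove the constant. First I will reduce to finitely supported $a\ge0$, using that $E$ is symmetric and has the Fatou property.

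My first attempt at removing $C$ is direct. Krivine's theorem, which underlies Proposition \ref{coincidence with lp}(ii) (since $\sigma(E)=\sigma(E_L)$ by (v)), gives for each $n$ and $\varepsilon>0$ normalized disjoint $u_1,\dots,u_n\in E$ with $\|\sum a_ku_k\|_E\le(1+\varepsilon)\|a\|_{\ell_{\sigma(E)}}$. Combined with \eqref{add1new}, this again costs $C$. I will then try to kill $C$ by a tensor-power argument. Placing disjoint normalized copies of $b$ as blocks and using \eqref{add1new} gives the supermultiplicativity $\|a\|_E\|b\|_E\le C\|a\otimes b\|_E$, hence
\[
\|a\|_E^k\le C^{k-1}\|a^{\otimes k}\|_E .
\]
Applying the Krivine step to $a^{\otimes k}$ gives $\|a^{\otimes k}\|_E\le C(1+\varepsilon)\|a\|_{\ell_{\sigma(E)}}^k$. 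Taking $k$-th roots then gives $\|a\|_E\le C(1+\varepsilon)^{1/k}\|a\|_{\ell_{\sigma(E)}}$, which is still not enough.

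So the step I expect to be the main obstacle is obtaining constant exactly $1$. My plan there is to pair the lower tensor inequality with an \emph{upper} bound on $\|a^{\otimes k}\|_E$ that is uniform in $k$ up to a factor $1+o(1)$ in the $k$-th root. The natural candidate is to exploit that the $\ell_{\sigma(E)}$ Krivine block structure is $(1+\varepsilon)$-isometric and can be chosen compatibly with the tensor structure, so that only one factor $C$, not $C^k$, enters. If this fails, I would conclude that only $\ell_{\sigma(E)}\overset{C}{\hookrightarrow}E$ follows from the quoted results. In that case the norm-one claim needs the additional argument in \cite[Proposition 3.9]{AN1}, applied with $X=E$ and the minimality of Proposition \ref{Tsir1a}.

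\emph{Part (b).} This is symmetric. Theorem \ref{add cor1}(b) gives $E=E_U$. Proposition \ref{coincidence with lp}(i) gives $\ell_q\hookrightarrow E_U$ for $q<\delta(E)$. Since $E_U\overset{1}{\hookrightarrow}E$ by \cite[Proposition 3.8(a)]{AN1}, this yields $\ell_q\hookrightarrow E$ immediately. For $E\overset{1}{\hookrightarrow}\ell_{\delta(E)}$, I will use Krivine's theorem again: pick disjoint normalized $u_k$ with $\|\sum a_ku_k\|_E\ge\|a\|_{\ell_{\delta(E)}}$, combine with \eqref{add2anew}, and face the same constant-removal issue, which I would handle as in (a) with the inequalities reversed.
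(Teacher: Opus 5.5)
\textbf{The norm-one embeddings.} Your route is the paper's route. The paper's whole argument is to identify $E$ with $E_L$ (resp.\ $E_U$) via Theorem \ref{add cor1} and then read off Proposition \ref{coincidence with lp}. Your proofs of $E\hookrightarrow\ell_q$ for $q>\sigma(E)$ in (a) and of $\ell_q\hookrightarrow E$ for $q<\delta(E)$ in (b) are correct. In (a) you rightly observe that only $E\overset{1}{\hookrightarrow}E_L$ is used.

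The gap is the norm-one embeddings, which you leave unproved. No plan can close them, because they are false in general. Neither the tensor-power trick nor the fallback to \cite[Proposition 3.9]{AN1} plus Proposition \ref{Tsir1a} can help: the latter, with $F=E$ and $C_1=1$, gives exactly $E_L\overset{C}{\hookrightarrow}E$, with the same factor $C$. Likewise \eqref{add2anew} gives $\|a\|_{E_U(n)}\le C\|a\|_E$, i.e.\ $E\overset{C}{\hookrightarrow}E_U$. So the identification yields precisely $\ell_{\sigma(E)}\overset{C}{\hookrightarrow}E$ and $E\overset{C}{\hookrightarrow}\ell_{\delta(E)}$, which is what you obtained. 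The superscript $1$ in the statement is not justified by the paper's argument either.

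Counterexample for (a):
\begin{itemize}
\item Take $\ell_2$ with the symmetric norm $\|a\|_E=\max\{\|a\|_{\ell_2},\,a_1^*+a_2^*\}$. It has the Fatou property and $\|e_1\|_E=1$.
\item Since $\|a\|_{\ell_2}\le\|a\|_E\le\sqrt2\,\|a\|_{\ell_2}$, we get $\sigma(E)=2$.
\item Every $u_k$ with $\|u_k\|_E=1$ has $\|u_k\|_{\ell_2}\ge 1/\sqrt2$. Hence \eqref{add1new} holds with $C=2$.
\item Yet $\|e_1+e_2\|_E=2>\sqrt2=\|e_1+e_2\|_{\ell_2}$.
\end{itemize}

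Counterexample for (b):
\begin{itemize}
\item Take $\|a\|_E=\max\{\|a\|_{\ell_\infty},\,\frac12\|a\|_{\ell_2}\}$. Here $\frac12\|a\|_{\ell_2}\le\|a\|_E\le\|a\|_{\ell_2}$, so $\delta(E)=2$.
\item Every $u_k$ with $\|u_k\|_E=1$ has $\|u_k\|_{\ell_2}\le 2$. Hence \eqref{add2anew} holds with $C=4$.
\item Yet $\|e_1+e_2\|_E=1<\sqrt2=\|e_1+e_2\|_{\ell_{\delta(E)}}$.
\end{itemize}

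Therefore drop the attempt to remove $C$ and state the corollary as $\ell_{\sigma(E)}\overset{C}{\hookrightarrow}E$ and $E\overset{C}{\hookrightarrow}\ell_{\delta(E)}$. Your argument already proves this once you pass from finitely supported sequences to general ones. In (a) use the Fatou property of $E$. In (b) use the Fatou property of $\ell_{\delta(E)}$.

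\textbf{A minor point in (b).} The embedding $E_U\overset{1}{\hookrightarrow}E$ from \cite[Proposition 3.8(a)]{AN1} assumes order semi-continuity, which (b) does not assume. You do not need it. For finitely supported $a$, the bound $\|a\|_E\le\|a\|_{E_U}$ is immediate because $\{e_k\}_{k=1}^n\in\mathfrak{B}_n(E)$. For $q<\infty$, density of finitely supported sequences in $\ell_q$ and completeness of $E$ then give $\ell_q\hookrightarrow E$.
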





\subsection{Tensor product in semi-homogeneous symmetric spaces}

Let $a=(a_k)_{k=1}^\infty$ and $b=(b_k)_{k=1}^\infty$ be two bounded equences of real numbers. By $a\otimes b$ we will denote the {\it tensor product} of $a$ and $b$, i.e., the sequence $(a_ib_j)_{i,j=1}^\infty$. 

Let $E,F$ and $G$ be symmetric sequence spaces. The tensor product operator defined by
\begin{equation}
\label{tensor}
(a,b)\mapsto a\otimes b
\end{equation}
is said to be bounded from direct product $E\times F$ in $G$ if there is a constant $C>0$ such that for all $a\in E$ and $b\in F$ it holds:
$$
\|(a\otimes b)^*\|_G\le C\|a\|_E\|b\|_F,$$
where $(a\otimes b)^*$ is the permutation of the sequence $(|a_ib_j|)_{i,j=1}^\infty$ in decreasing order.


Following \cite{Jun02}, we represent the tensor product for finitely supported sequences slightly in a different way. Let 
$a=(a_k)$ and $b=(b_k)$ be two finitely supported sequences, say, $a_k=b_k=0$ if $k>n$. Then the tensor product of $a$ and $b$ may be defined as follows:
$$
a\otimes b:= \sum_{i=1}^n a_i\tilde{b}^i,$$
where $\tilde{b}^i$ are the following "shifts" of the sequence $b$:
$$
\tilde{b}^i:=\sum_{k=1}^n b_ke_{(i-1)n+k}$$
(in particular, $\tilde{b}^1=b$).



Let us show that operator \eqref{tensor} has some special properties in semi-homogeneous symmetric spaces.

Recall that $\phi_{E}$ is the fundamental function of a symmetric sequence space $E$
(see Section \ref{RI}).
 
\begin{proposition}
\label{Tsir5} 

(a) If $E$ is an upper semi-homogeneous symmetric sequence space, then operator
\eqref{tensor} is bounded from $E\times E$ into $E$.

In particular, for some $C>0$
$$
\phi_E(mn)\le C\phi_E(n)\phi_E(m),\;\;n,m\in\mathbb{N}.$$

(b) If $E$ is a lower semi-homogeneous symmetric sequence space, we have 
$$
\|a\|_E\|b\|_E\le C\|a\otimes b\|_E.$$

In particular, 
$$
\phi_E(n)\phi_E(m)\le C\phi_E(mn),\;\;n,m\in\mathbb{N}.$$
\end{proposition}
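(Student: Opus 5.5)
The plan is to apply the defining inequalities of semi-homogeneity (Theorem \ref{add cor1}, i.e. \eqref{add2anew} and \eqref{add1new}) to the representation $a\otimes b=\sum_{i=1}^n a_i\tilde b^i$, taking the shifts $\tilde b^i$ as the pairwise disjoint blocks. By symmetry of $E$ and the Fatou property (needed only to pass from finitely supported to general sequences; in (a) one may instead use that $E$ contains the finitely supported part and take suprema), it suffices to treat finitely supported $a,b$ with $a_k=b_k=0$ for $k>n$. The elements $\tilde b^i$, $i=1,\dots,n$, have disjoint supports $\{(i-1)n+1,\dots,in\}$, and by symmetry $\|\tilde b^i\|_E=\|b\|_E$ for every $i$.

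For (a), assume $b\neq0$ and set $u_i:=\tilde b^i/\|b\|_E$. Then $\{u_i\}_{i=1}^n\in\mathfrak B_n(E)$, and \eqref{add2anew} gives
$$\|a\otimes b\|_E=\|b\|_E\Big\|\sum_{i=1}^n a_iu_i\Big\|_E\le C\|b\|_E\Big\|\sum_{i=1}^n a_ie_i\Big\|_E=C\|a\|_E\|b\|_E.$$
Since $(a\otimes b)^*$ is a rearrangement of $|a\otimes b|$, its $E$-norm is the same. For general $a\in E$, $b\in E$, I would apply this to the truncations and let $n\to\infty$. The truncations $(a\otimes b)$ restricted to $[1,n]^2$ increase to $|a\otimes b|$, so the Fatou property of $E$ (assumed in Theorem \ref{add cor1} for the characterization, or available since $E=E_U$ has the Fatou property by Proposition \ref{Prop-XL-XU-Fatou}) gives the bound. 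Taking $a=\sum_{k\le m}e_k$ and $b=\sum_{k\le n}e_k$, the sequence $a\otimes b$ is a rearrangement of $\sum_{k\le mn}e_k$, which yields $\phi_E(mn)\le C\phi_E(m)\phi_E(n)$.

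For (b), the same choice of $u_i$ together with \eqref{add1new} gives
$$\|a\|_E\|b\|_E=\|b\|_E\Big\|\sum_{i=1}^n a_ie_i\Big\|_E\le C\|b\|_E\Big\|\sum_{i=1}^n a_iu_i\Big\|_E=C\|a\otimes b\|_E.$$
For general $a,b$ we have $\|a\|_E=\sup_n\|P_na\|_E$ by the Fatou property, and $\|P_na\otimes P_nb\|_E\le\|a\otimes b\|_E$ by monotonicity of the lattice norm, with $a\otimes b$ understood via its decreasing rearrangement. This gives the general inequality, and the same indicator choice as in (a) gives $\phi_E(n)\phi_E(m)\le C\phi_E(mn)$.

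The only delicate point is bookkeeping rather than a genuine obstacle: checking that the shift representation agrees, up to rearrangement, with $(a_ib_j)$, and justifying the passage to infinite sequences via Fatou and symmetry. In (b), if $a\otimes b\notin E$ there is nothing to prove.
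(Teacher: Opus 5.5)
Your proof is correct and follows essentially the same argument as the paper. You reduce to finitely supported $a,b$ via the Fatou property, write $a\otimes b=\sum_{i}a_i\tilde b^i$ with $\|\tilde b^i\|_E=\|b\|_E$, and apply \eqref{add1new} (resp.\ \eqref{add2anew}) to the normalized disjoint blocks $\tilde b^i/\|b\|_E$; the paper writes out only part (b) and leaves (a) as analogous, which you have supplied.
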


\begin{proof}
We prove only assertion (b) (the proof of (a) follows by the same lines).

By Theorem  \ref{add cor1}(a), $E=E_L$ and hence, thanks to Proposition \ref{Prop-XL-XU-Fatou}, the space $E$ has the Fatou property. Consequently, it suffices to consider
only finitely supported sequences $a$ and $b$. 

Assume that $a_k=b_k=0$ for all $k>n$. Then, taking into account that $\|\tilde{b}^i\|_E=\|b\|_E$ for all $i=1,\dots,n$, by the assumption, applied to the sequences $\tilde{b}^i$, $i=1,\dots,n$, we get  
$$
\|a\|_E\|b\|_E=\Big\|\sum_{i=1}^n a_i\|\tilde{b}^i\|_Ee_i\Big\|_E\le C\Big\|\sum_{i=1}^n a_i\tilde{b}^i\Big\|_E=C\|a\otimes b\|_E.$$
Thus, the first assertion is proved. Since the second assertion is an immediate consequence of the first one, the proof is completed.
\end{proof}

\vskip0.3cm

\section{\label{Appl}
Applications to Lorentz and Orlicz sequence spaces}

\subsection{\label{Lor}
Lorentz spaces}

Let $\{w_k\}_{k=1}^\infty$ be a nonincreasing sequence of positive numbers such that $w_1=1$, $\lim_{k\to\infty} w_k=0$ and $\sum_{k=1}^\infty w_k=\infty$, $1\le q<\infty$, and let $\lambda_q(w)$ be the Lorentz sequence space (see Section \ref{RI}). Recall also that $\mu_{\lambda_q(w)}$ and $\nu_{\lambda_q(w)}$ are the lower and upper fundamental indices of the fundamental function $\phi_{\lambda_q(w)}$ (see formulae \eqref{fund Lor}, \eqref{Boyd ind Lor} and \eqref{Boyd ind Lor1}).

\begin{proposition}
\label{Lorentz7}  We have the following:

(i) $(\lambda_q(w))_U={\ell_q}$;

(ii) $(\lambda_q(w))_L=\ell_{1/\mu_{\lambda_q(w)}}$ (equivalently, $(\lambda_q(w))_L={\ell_p}$ for some $1\le p<\infty$) if and only if 
there exists a constant $C>0$ such that for any $n,l\in\mathbb{N}$ it holds
\begin{equation}
\label{Lor: assump}
\sum_{k=1}^{n}w_k\le Cl^{-q\mu_{\lambda_q(w)}}\sum_{k=1}^{ln}w_k.
\end{equation}

In particular, in the case when $\mu_{\lambda_q(w)}=1/q$ inequality \eqref{Lor: assump} 
is fulfilled if and only if $w$ is equivalent to a constant sequence, that is, $\lambda_q(w)={\ell_q}$.
\end{proposition}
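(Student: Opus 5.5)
For (i), I would use Proposition~\ref{coincidence with lp}(iii): it suffices to show that $X=\lambda_q(w)$ admits an upper $q$-estimate and that $\delta(X)=q$. For the upper estimate, let $x_1,\dots,x_n$ be disjoint and use the Hardy--Littlewood representation $\|x\|^q_{\lambda_q(w)}=\sup_\pi\sum_k|x_{\pi(k)}|^qw_k$. Fix the optimal arrangement of $\sum_i x_i$ and split the sum according to the supports of the $x_i$. The weights attached to the support of $x_i$, listed in increasing order of position, are termwise dominated by $w_1,w_2,\dots$, because $w$ is nonincreasing. Hence each part is at most $\|x_i\|^q$, which gives $\|\sum x_i\|^q\le\sum\|x_i\|^q$, so $M^{[q]}(X)=1$ and $\ell_q\overset{1}{\hookrightarrow}X_U$.

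For $\delta(X)\le q$, I would invoke the classical fact that $\ell_q$ is finitely lattice representable (indeed embeds as disjoint normalized blocks) in $\lambda_q(w)$; this rules out upper $p$-estimates for $p>q$. Then Proposition~\ref{coincidence with lp}(iii) gives $X_U=\ell_q$.

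For (ii), I would use Proposition~\ref{coincidence with lp}(iv): $X_L=\ell_p$ iff $p=\sigma(X)$ and $X$ has a lower $p$-estimate. Put $W(n)=\sum_{k\le n}w_k$ and $r=q\mu_{\lambda_q(w)}\le 1$. Taking $l$ disjoint flat blocks of length $n$ gives the exact identity
\[
\Big\|\sum_{i=1}^l u_i\Big\|=\big(W(ln)/W(n)\big)^{1/q}.
\]
So an equal-norm lower $p$-estimate forces $l^{q/p}W(n)\le C W(ln)$, and by \eqref{Boyd ind Lor} this forces $1/p\le\mu_{\lambda_q(w)}$. Hence $\sigma(X)\ge 1/\mu_{\lambda_q(w)}$, and the case $p=1/\mu_{\lambda_q(w)}$ of the same computation shows that a lower $1/\mu$-estimate implies \eqref{Lor: assump}. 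This is the necessity direction.

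For sufficiency, the main step is to show that \eqref{Lor: assump} yields a genuine (not only equal-norm) lower $p$-estimate with $p=q/r$.
\begin{itemize}
\item Rewrite \eqref{Lor: assump} as $W(ln)^{1/r}/(ln)\ge c\,W(n)^{1/r}/n$. Thus $W^{1/r}(s)/s$ is quasi-increasing, which makes $W^{1/r}$ quasi-superadditive: $\sum_iW(n_i)^{1/r}\le C'W(\sum_in_i)^{1/r}$.
\item Use the layer-cake formula $\|x\|^q=\int_0^\infty W(n_x(t))\,dt$ with $n_x(t)=|\{k:|x_k|^q>t\}|$. For disjoint $x_i$ we have $n_{\sum x_i}(t)=\sum_i n_{x_i}(t)$, so
\[
\Big\|\sum x_i\Big\|^q\ge c\int_0^\infty\Big(\sum_iW(n_{x_i}(t))^{p/q}\Big)^{q/p}dt .
\]
\item Minkowski's integral inequality in $\ell_{p/q}$, with $p/q\ge1$, bounds the right-hand side below by $\big(\sum_i\|x_i\|^p\big)^{q/p}$.
\end{itemize}
This gives the lower $p$-estimate. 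In particular $\sigma(X)\le p$, so $\sigma(X)=p$, and Proposition~\ref{coincidence with lp}(iv) yields $X_L=\ell_p$. The parenthetical equivalence (any $\ell_p$ forces $p=1/\mu$) follows from the same Proposition together with the flat-block computation. I expect the quasi-superadditivity step, with careful constants, to be the main obstacle.

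For the final claim, suppose $\mu=1/q$, so $r=1$. Then \eqref{Lor: assump} with $n=1$ gives $W(l)\ge l/C$, while $W(l)\le l$, so $W(n)\asymp n$. Monotonicity of $w$ then gives
\[
w_k\ge\frac{W(Mk)-W(k)}{(M-1)k}\ge c>0
\]
for a suitably large fixed $M$, so $w$ is equivalent to a constant and $\lambda_q(w)=\ell_q$. This is incompatible with the standing assumption $w_k\to0$, so within the paper's normalization the statement reads as this degenerate case. The converse direction is trivial.
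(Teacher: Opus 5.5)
\textbf{Gap: the parenthetical equivalence in (ii).} Your arguments for (i), for the main equivalence in (ii), and for the final claim are correct. The parenthetical ``equivalently, $(\lambda_q(w))_L=\ell_p$ for some $p$'' is not justified. You say that $(\lambda_q(w))_L=\ell_p$ forces $p=1/\mu$ by Proposition~\ref{coincidence with lp}(iv) ``together with the flat-block computation'', but flat blocks only give one inequality. From $(\lambda_q(w))_L=\ell_p$ you get $p=\sigma(\lambda_q(w))$ and a lower $p$-estimate, and flat blocks then give $p\ge 1/\mu$.

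To exclude $p>1/\mu$ you need $\sigma(\lambda_q(w))\le 1/\mu$ \emph{unconditionally}, i.e.\ lower $p'$-estimates for every $p'>1/\mu$. Flat blocks test the lower estimate only on one special class of disjoint families, so they can only produce necessary conditions. The reverse inequality needs an estimate for arbitrary disjoint elements. In your write-up, $\sigma=1/\mu$ is obtained only under the hypothesis \eqref{Lor: assump}.

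\textbf{How to close it.} The paper obtains $\sigma=1/\mu$ by showing that \eqref{Lor: dil} holds for every $p>1/\mu$. The definition of $\mu$ gives $W(j)\le C_p2^{-nq/p}W(2^nj)$ for all $n,j$. For $2^n\le l<2^{n+1}$, monotonicity of $w$ gives $W(2^{n+1}j)\le W(2lj)\le 2W(lj)$, hence $W(j)\le 2C_pl^{-q/p}W(lj)$. Feed this into your own layer-cake/Minkowski argument with $r=q/p\in(0,q\mu)$; note $r<1$, so Minkowski in $\ell_{1/r}$ applies. This gives a lower $p$-estimate for every $p>1/\mu$, hence $\sigma(\lambda_q(w))=1/\mu$ always, and Proposition~\ref{coincidence with lp}(iv) then yields the parenthetical. (When $\mu=0$ your exponent $r$ vanishes, but the lower $\infty$-estimate is trivial there.)

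\textbf{Comparison with the paper's route.} Apart from this, your route for (ii) differs genuinely from the paper's. The paper takes Carothers' criterion as a black box: a lower $p$-estimate holds iff $n^{-q/p}W(n)$ is equivalent to an increasing sequence. It uses this to compute $\sigma(\lambda_q(w))=1/\mu$ and reads off both directions from Proposition~\ref{coincidence with lp}(iv). You instead prove necessity elementarily by flat blocks, and you reprove the relevant half of Carothers' theorem directly. That proof uses quasi-superadditivity of $W^{1/r}$, the identity $\|x\|^q=\int_0^\infty W(n_x(t))\,dt$, and Minkowski's integral inequality, and it is correct. One detail: \eqref{Lor: assump} only compares $n$ with its multiples. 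To get quasi-monotonicity of $W(s)^{1/r}/s$ for arbitrary $n\le m$, take $l=\lceil m/n\rceil$ and use $W(2m)\le 2W(m)$. Your route is self-contained; the paper's is shorter and gives $\sigma=1/\mu$ for free.

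For (i), your direct proof of the upper $q$-estimate with constant $1$ replaces the paper's $q$-convexification remark. For $\delta\le q$, both you and the paper rely on classical results. Your handling of the final claim is more explicit than the paper's, and you correctly observe that under the standing assumption $w_k\to 0$ it says \eqref{Lor: assump} fails whenever $\mu=1/q$.
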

\begin{proof}
(i) Since $\lambda_q(w)$ is the $q$-convexification of the space $\lambda_1(w)$ (see e.g. \cite[1d, p.~53]{LT79}), $\lambda_q(w)$ satisfies an upper $q$-estimate. Moreover, it does not satisfy an upper $r$-estimate for any $r>q$ (see \cite{R-81}, \cite{Ca-82} or \cite[Theorem 3]{KMP-98}). Thus, $\delta(\lambda_q(w))=q$. Combining this fact with Proposition \ref{coincidence with lp}(iii), we get the desired result.

(ii) By \cite{Ca-82} (see also \cite[Theorem 10]{KMP-98}), $\lambda_q(w)$ satisfies a lower $p$-estimate if and only if 
the sequence $n^{-q/p}S_n$, where  $S_n:=\sum_{k=1}^n w_k$, is equivalent to an increasing sequence. Clearly, this holds if and only if there exists a constant $C_p>0$ such that for all positive integers $j,l$ we have
\begin{equation}
\label{Lor: dil}
S_j\le C_p l^{-q/p}S_{lj}.
\end{equation}
We claim that inequality \eqref{Lor: dil}  holds if $p>1/\mu_{\lambda_q(w)}$. Indeed, combining the last condition with formula \eqref{Boyd ind Lor}, we obtain 
$$
S_j\le C_p'2^{-nq/p}S_{2^nj}$$
for some $C_p'>0$ and all $n=0,1,\dots$ and $j\in\mathbb{N}$. Now, for every $l\in\mathbb{N}$, choosing $n$ so  that $2^n\le l < 2^{n+1}$ and taking into account that $\{w_k\}_{k=1}^\infty$ is nonincreasing, we infer  
$$
S_j\le C_p'2^{-(n+1)q/p}S_{2^{n+1}j}\le C_p'l^{-q/p}\Big(\sum_{k=1}^{lj} w_k+\sum_{k=lj+1}^{2^{n+1}j} w_k\Big)\le 2C_p'l^{-q/p}S_{lj},$$
and our claim is proved.
Moreover, from \eqref{Boyd ind Lor}  it follows that the opposite inequality $p<1/\mu_{\lambda_q(w)}$ implies that
$$
\lim_{n\to\infty}\frac{2^{nq/p}S_j}{S_{2^nj}}=\infty.$$
Hence, then \eqref{Lor: dil} fails to hold. 

As a result of the above discussion, we see that the lower Grobler-Dodds index $\sigma(\lambda_q(w))$ is equal to $1/\mu_{\lambda_q(w)}$.
Therefore, inequality \eqref{Lor: assump} holds if and only if the space $\lambda_q(w)$ satisfies a lower $\sigma(\lambda_q(w))$-estimate. Thus, applying Proposition \ref{coincidence with lp}(iv), we conclude that \eqref{Lor: assump} is equivalent to the fact that $(\lambda_q(w))_L=\ell_{1/\mu_{\lambda_q(w)}}$ (or $(\lambda_q(w))_L={\ell_p}$ for some $1\le p<\infty$).

Finally, since $\{w_k\}_{k=1}^\infty$ is a nonincreasing sequence, inequality \eqref{Lor: assump} may be valid only if $\mu_{\lambda_q(w)}\le 1/q$. Hence, assuming that $\mu_{\lambda_q(w)}=1/q$, we obtain the last assertion of the proposition.


\end{proof}


From Proposition \ref{Lorentz7}(i) and Theorem \ref{add cor1} it follows

\begin{corollary}
\bigskip \label{cor for Lor} 
The following conditions are equivalent:

(a) for all $n\in\mathbb{N}$ and pairwise disjoint sequences $u_i\in \lambda_q(w)$ it holds
$$
\Big\|\sum_{i=1}^n u_i\Big\|_{\lambda_q(w)}\le \Big\|\sum_{i=1}^n \|u_i\|_{\lambda_q(w)}e_i\Big\|_{\lambda_q(w)};$$

(b) $w$ is equivalent to a constant sequence and hence $\lambda_q(w)={\ell_q}$.
\end{corollary}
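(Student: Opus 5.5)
The plan is to prove the two implications separately, treating (b)$\Rightarrow$(a) as essentially immediate and putting the work into (a)$\Rightarrow$(b).

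\emph{(b)$\Rightarrow$(a).} If $w$ is equivalent to a constant sequence, then $\lambda_q(w)=\ell_q$ up to equivalence of norms. For $\ell_q$ the inequality in (a) is an equality, since for disjoint $u_i$ we have $\|\sum u_i\|_{\ell_q}^q=\sum\|u_i\|_{\ell_q}^q$. If the constant in (a) must literally be $1$, I will note that the statement is intended up to equivalence, i.e. with $\lambda_q(w)=\ell_q$ isometrically when $w\equiv 1$. Otherwise the inequality holds with the equivalence constant, which is exactly the condition \eqref{add2anew}.

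\emph{(a)$\Rightarrow$(b).} Condition (a) says that $E=\lambda_q(w)$ is upper semi-homogeneous. First I check that (a) implies \eqref{add2anew}. Take $\{u_k\}\in\mathfrak{B}_n(E)$ and scalars $a_k$, and apply (a) to the disjoint elements $a_ku_k$. Since $\|a_ku_k\|_E=|a_k|$, this gives $\|\sum a_ku_k\|_E\le\|\sum|a_k|e_k\|_E=\|\sum a_ke_k\|_E$, which is \eqref{add2anew} with $C=1$. The space $\lambda_q(w)$ is symmetric and has the Fatou property, so Theorem \ref{add cor1}(b) applies and yields $E=E_U$. By Proposition \ref{Lorentz7}(i), $E_U=(\lambda_q(w))_U=\ell_q$, and therefore $\lambda_q(w)=\ell_q$ with equivalent norms.

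It remains to pass from this identification to the statement about $w$. Comparing fundamental functions via \eqref{fund Lor}, we get $\sum_{k=1}^n w_k\asymp n$. Because $w$ is nonincreasing and $w_1=1$, this forces $w_n\ge c>0$. Indeed, $n w_n\ge\sum_{k=n+1}^{2n}w_k=S_{2n}-S_n$, and $S_{2n}-S_n\ge c'n$ for suitable constants. So $w$ is bounded above by $1$ and below by a positive constant, i.e. it is equivalent to a constant sequence.

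The only delicate point is this last lower bound on $S_{2n}-S_n$. From $S_n\asymp n$ alone the difference could be small, so instead of working with doubling I will use $S_{Ln}-S_n\ge(c_1L-c_2)n$ with $L$ chosen large. Together with monotonicity this gives $(L-1)n\,w_n\ge S_{Ln}-S_n$, hence $w_n\ge (c_1L-c_2)/(L-1)>0$.

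It is worth flagging an apparent tension with the paper's standing hypothesis on $\lambda_q(w)$. The definition requires $w_k\to0$, so a $w$ equivalent to a constant is actually excluded there. The corollary should therefore be read as saying that (a) never holds for a genuine Lorentz space with $w_k\to0$; in the (b)$\Rightarrow$(a) direction I simply use $\ell_q$ itself.
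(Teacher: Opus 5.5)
Your proposal is correct and follows the paper's route. Condition (a) gives upper semi-homogeneity, Theorem \ref{add cor1}(b) yields $\lambda_q(w)=(\lambda_q(w))_U$, and Proposition \ref{Lorentz7}(i) identifies this space with $\ell_q$; your monotonicity argument deducing $\inf_n w_n>0$ from $S_n\asymp n$, and your remarks on the constant in (a) and on the standing hypothesis $w_k\to 0$, make explicit what the paper's one-line justification leaves implicit.
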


In \cite{CaLin}, Casazza and Bor-Luh Lin proved that the space $\lambda_q(w)$ is lower semi-homogeneous if and only if 
\begin{equation}
\label{Lor: did}
\sup_{n\in\mathbb{N}}\frac{\sum_{k=1}^n d_k}{\sum_{k=1}^nw_k}<\infty,
\end{equation}
where $\{d_k\}_{k=1}^\infty$ is the permutation of the sequence $\{w_iw_j\}_{i,j=1,2,\dots}$ in decreasing order. Combining this result together with Theorem \ref{add cor1}(a) we obtain the following.

\begin{corollary}
\label{cor for Lor for lower} 
Let $1\le q<\infty$, $\{w_k\}_{k=1}^\infty$ be a nonincreasing sequence of positive numbers such that $w_1=1$, $\lim_{k\to\infty} w_k=0$ and $\sum_{k=1}^\infty w_k=\infty$, $1\le q<\infty$. The following conditions are equivalent:

(a) $(\lambda_q(w))_L=\lambda_q(w)$;

(b) the sequence $\{w_k\}$ satisfies condition \eqref{Lor: did}.
\end{corollary}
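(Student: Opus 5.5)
The plan is to chain Theorem \ref{add cor1}(a) with the Casazza--Lin criterion, being careful about the precise meaning of ``lower semi-homogeneous'' in each. First I note that $\lambda_q(w)$ is a symmetric sequence space with the Fatou property (Section \ref{RI}), so Theorem \ref{add cor1}(a) applies to $E=\lambda_q(w)$. It gives: $(\lambda_q(w))_L=\lambda_q(w)$, with equivalence of norms, if and only if there is $C>0$ such that \eqref{add1new} holds for all $n$, all scalars $a_k$ and all $\{u_k\}_{k=1}^n\in\mathfrak{B}_n(\lambda_q(w))$.

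Next I identify \eqref{add1new} with lower semi-homogeneity of the unit vector basis in the sense of Section \ref{Lower/upper semi-homogeneous}, which is the notion used in \cite{CaLin}. In one direction, a normalized disjointly supported sequence is, after a permutation of $\mathbb{N}$ (a symmetric isometry), a normalized block basis. So domination over all bounded block bases gives \eqref{add1new}; boundedness of the blocks is automatic since they are normalized. In the other direction, suppose \eqref{add1new} holds and $\{y_k\}$ is a bounded block basis. I normalize it and apply \eqref{add1new} to $u_k=y_k/\|y_k\|$ with coefficients $a_k\|y_k\|$. For the domination $\|\sum a_ke_k\|\lesssim\|\sum a_ky_k\|$ I also need $\inf_k\|y_k\|>0$. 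This is the standard convention: block bases in the definition of semi-homogeneity are seminormalized, i.e. bounded above and away from zero, and in particular normalized block bases suffice. I will state this explicitly, and handle the lower bound using that $\lambda_q(w)$ is a symmetric lattice with $\|y_k\|\ge \inf_j\|y_j\|$.

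Finally I invoke the Casazza--Lin theorem: $\lambda_q(w)$ is lower semi-homogeneous if and only if \eqref{Lor: did} holds. Combining the three equivalences yields (a)$\Leftrightarrow$(b).

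The only delicate step is the dictionary between \eqref{add1new} and the block-basis definition, namely the permutation argument and the normalization convention. Everything else is a direct citation.
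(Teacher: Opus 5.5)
Your proposal is correct and takes the paper's route: the paper proves the corollary by combining Theorem \ref{add cor1}(a), which applies since $\lambda_q(w)$ is symmetric with the Fatou property, with the Casazza--Lin criterion, and it treats lower semi-homogeneity and inequality \eqref{add1new} as the same thing by definition (the ``i.e.'' in Theorem \ref{add cor1}). Your extra dictionary step is therefore optional, but if you keep it, two points need attention: elements of $\mathfrak{B}_n(\lambda_q(w))$ may be infinitely supported, so you must truncate them using the separability of $\lambda_q(w)$; and domination by each block basis a priori gives only a sequence-dependent constant, so a uniform $C$ in \eqref{add1new} needs the standard gluing argument (place the bad finite families one after another via permutations of $\mathbb{N}$ to form a single normalized block basis).
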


\begin{example}
(i) Let $1<q<\infty$, $0<\alpha<1$, and $w_k=k^\alpha- (k-1)^\alpha$, $k=1,2,\dots$ Then, the Loremtz space $\lambda_q(w)$ is just ${\ell_{p,q}}$, where $p=q/\alpha$. In this case $\mu_{\lambda_q(w)}=\nu_{\lambda_q(w)}=\alpha/q$ and, by Proposition \ref{Lorentz7},   $(\lambda_q(w))_U={\ell_q}$ and $(\lambda_q(w))_L=\ell_{q/\alpha}$ (see Example \ref{Lorentz}). Clearly, the sequence $\{w_k\}_{k=1}^\infty$ does not satisfy condition \eqref{Lor: did} for every $0<\alpha<1$.

(b) (see \cite[Corollary 8]{CaLin2}). If $v_1=v_2=1$, $v_k=1/\log k$, $k=3,4,\dots$,  condition \eqref{Lor: did} holds. Thus, by Corollary \ref{cor for Lor for lower}, $(\lambda_q(v))_L=\lambda_q(v)$ for every $1\le q<\infty$. 

Since
$$
\sum_{k=3}^j v_k\asymp \int_2^j \frac{dt}{\log t},\;\;j\ge 3,$$ 
we have 
$$
\sum_{k=1}^j v_k\asymp \frac{j}{\log(2j)},\;\;j\ge 1.$$   
Combining the latter equivalence with formulae \eqref{Boyd ind Lor} and \eqref{Boyd ind Lor1}, one can deduce that $\mu_{\lambda_q(v)}=\nu_{\lambda_q(v)}=1/q$. In particular, this implies that the sequence $v$ fails to satisfy condition \eqref{Lor: assump}.
\end{example}

\vskip0.2cm

\subsection{\label{Orl}
Orlicz spaces}

Let $N$ be an Orlicz function on $[0,\infty)$ such that $N(0)=0$ and $N(1)=1$ and let ${\ell_N}$ be the Orlicz sequence space (see Section \ref{RI}).

We start with giving rather simple necessary and sufficient conditions under which ${\ell_N}$ is a lower (resp. an upper) semi-homogeneous space.


\begin{proposition}
\label{Tsir7} Let $N$ satisfy the $\Delta_2$-condition at zero. The following conditions are equivalent:

(a) ${\ell_N}$ is lower semi-homogeneous;

(b) ${\ell_N}=({\ell_N})_L$;

(c) ${\ell_N}=X_L$ for some Banach lattice $X$;

(d)  there exists a constant $C\ge 1$ such that for all $a,b\in {\ell_N}$ it holds
$$
\|a\|_{{\ell_N}}\|b\|_{{\ell_N}}\le C\|a\otimes b\|_{{\ell_N}};$$

(e)  there exists a constant $C_1$ such that for all $0<s,t\le 1$ we have
$$
N(s)N(t)\le C_1N(st).$$
\end{proposition}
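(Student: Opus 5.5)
The equivalences (a)$\Leftrightarrow$(b)$\Leftrightarrow$(c) are immediate from Theorem \ref{add cor1}(a), since $\ell_N$ is symmetric with the Fatou property. Proposition \ref{Tsir5}(b) then gives (a)$\Rightarrow$(d). So the real work is the chain (d)$\Rightarrow$(e)$\Rightarrow$(a).

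\emph{Step (d)$\Rightarrow$(e).} I would test (d) on two kinds of sequences:
\[
a=\sum_{k=1}^m e_k,\qquad b=s\sum_{j=1}^n e_j .
\]
Here $\|a\|_{\ell_N}=1/N^{-1}(1/m)$ and $\|b\|=s/N^{-1}(1/n)$, while $a\otimes b$ has $mn$ entries equal to $s$, so $\|a\otimes b\|=s/N^{-1}(1/(mn))$. Writing $t=N^{-1}(1/m)$, this yields
\[
N^{-1}(\alpha)N^{-1}(\beta)\ge C^{-1}N^{-1}(\alpha\beta)
\]
for reciprocals of integers. This is an inverse-function form of a submultiplicativity-type inequality, and it is not directly (e) because of the direction.

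It is cleaner to take both sequences with a single nonzero entry of different sizes, or with many entries of possibly different heights:
\[
a=u\sum_{k=1}^m e_k,\qquad b=v\sum_{j=1}^n e_j .
\]
Then (d) reads
\[
\frac{u}{N^{-1}(1/m)}\cdot\frac{v}{N^{-1}(1/n)}\le C\,\frac{uv}{N^{-1}(1/(mn))}.
\]
Given $0<s,t\le1$, choose $m\approx 1/N(s)$ and $n\approx1/N(t)$, so that $N^{-1}(1/m)\approx s$ and $N^{-1}(1/n)\approx t$. The inequality becomes
\[
N^{-1}\bigl(N(s)N(t)\bigr)\lesssim C\,st.
\]
Applying $N$ and using convexity ($N(\lambda x)\ge\lambda N(x)$ for $\lambda\ge1$) together with $\Delta_2$, which absorbs the constant $C$ and the rounding errors from choosing integers, I get $N(s)N(t)\le C_1N(st)$. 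The integer rounding is handled by $\Delta_2$ and monotonicity, since $1/N(s)$ lies within a factor $2$ of an integer whenever $N(s)\le 1$.

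\emph{Step (e)$\Rightarrow$(a).} This is the main obstacle. I must show
\[
\Bigl\|\sum a_ke_k\Bigr\|\le C\Bigl\|\sum a_ku_k\Bigr\|
\]
for normalized disjoint $u_k$. By $\Delta_2$, each normalized $u_k$ satisfies $\sum_j N(|u_k(j)|)=1$. By homogeneity, assume $\|\sum a_ku_k\|=1$, so that
\[
\sum_k\sum_j N(|a_k||u_k(j)|)\le1 .
\]
Condition (e) gives $N(|a_k|)N(|u_k(j)|)\le C_1N(|a_k u_k(j)|)$; all arguments lie in $[0,1]$ after restricting to bounded coefficients, since $|a_k|\le\|\sum a_ku_k\|_{\ell_\infty}$-type bounds follow from $\|a\|_\infty\le\Phi_n(a)$. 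Summing over $j$ yields
\[
\sum_k N(|a_k|)\le C_1 .
\]
Convexity then gives $\|\sum a_ke_k\|\le C_1$ (using $N(x/C_1)\le N(x)/C_1$ for $C_1\ge1$). This proves \eqref{add1new} with constant $C_1$, hence lower semi-homogeneity.

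The delicate points are the normalization $\sum_jN(|u_k(j)|)=1$, which needs $\Delta_2$ so that the modular attains $1$ on the unit sphere, and keeping every argument inside $(0,1]$ so that (e) applies. Both should be manageable.
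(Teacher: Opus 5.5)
Your proposal is correct and follows essentially the paper's route. The equivalence of (a)--(c) comes from Theorem \ref{add cor1}(a), and (a)$\Rightarrow$(d) from Proposition \ref{Tsir5}(b). Then (d)$\Rightarrow$(e) is obtained from the fundamental-function inequality $\phi_{\ell_N}(n)\phi_{\ell_N}(m)\le C\phi_{\ell_N}(mn)$ together with \eqref{fund Orl}, convexity and $\Delta_2$. Finally, (e)$\Rightarrow$(a) is the same modular computation using $\sum_j N(|u_k(j)|)=1$ and $|a_k|\le 1$.

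One small point: your second test with heights $u,v$ gives literally the same inequality as the first, since $u,v$ cancel. So the ``wrong direction'' worry is unfounded. Applying $N$ to $N^{-1}(N(s)N(t))\lesssim C\,st$ and absorbing the constant by $\Delta_2$ on a bounded range already yields (e).
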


\begin{proof}
Observe first that the implication $(a)\Rightarrow (b)$ is an immediate consequence of Theorem  \ref{add cor1}(a), while the implications $(b)\Rightarrow (c)$ and $(d)\Rightarrow (e)$ are obvious.

$(c)\Rightarrow (d)$. From (c), Theorem \ref{add cor1}(a) and Proposition \ref{Tsir5}(b) it follows that
$$
\phi_{{\ell_N}}(n)\phi_{{\ell_N}}(m)\le C'\phi_{{\ell_N}}(mn),\;\;n,m\in\mathbb{N}.$$
Now, combining this together with equality \eqref{fund Orl}, the $\Delta_2$-condition and convexity of $N$, one easily gets (d).

$(e)\Rightarrow (a)$. 
We need to show that for every  $n\in \mathbb{N}$, $\left\{ u_{k}\right\}_{k=1}^{n}\in\mathfrak{B}_{n}\left( X\right) $ and $a_k\in \mathbb{R}$, $k=1,\dots,n$ it holds
\begin{equation}
\label{add1new Orlicz}
\Big\|\sum_{k=1}^n a_ke_{k}\Big\|_{{\ell_N}}\le C_1\Big\|\sum_{k=1}^n a_ku_k\Big\|_{{\ell_N}},
\end{equation}
where $C_1$ is the constant from condition (e).
Without loss of generality, we may assume that $a_k\ge 0$, $u_k=\sum_{i\in A_k}b_ie_i$, where $A_k$ are pairwise disjoint sets of positive integers and $b_i\ge 0$ for all $i\in \cup_{k=1}^n A_k$. Moreover, by homogeneity, it can be assumed also that $a_k\le 1$, $k=1,\dots,n$. Then, since $\|u_k\|_{{\ell_N}}=1$, we have $\sum_{i\in A_k}N(b_i)=1,$ $k=1,\dots,n$ (in particular, this implies that $b_i\le 1$ because of the fact that $N$ is an increasing function and $N(1)=1$). Therefore, if
$$
\Big\|\sum_{k=1}^n a_ku_k\Big\|_{{\ell_N}}=1,$$ 
from the assumption it follows that
$$ 
1=\sum_{k=1}^n\sum_{i\in A_k}N(a_kb_i^k)\ge C_1^{-1}\sum_{k=1}^n N(a_k)\cdot\sum_{i\in A_k}N(b_i^k)=C_1^{-1}\sum_{k=1}^n N(a_k).$$
Since $C_1\ge 1$ and $N$ is an Orlicz function, this implies that 
$$
\Big\|\sum_{k=1}^n a_ke_{k}\Big\|_{{\ell_N}}\le C_1.$$
Thus, \eqref{add1new Orlicz} is proved and hence everything is done.  
\end{proof}

The proof of the next assertion is similar, and so it is skipped.

\begin{proposition}
\label{Tsir8} Let $N$ be an Orlicz function which satisfies the $\Delta_2$-condition at zero. The following conditions are equivalent:

(a) ${\ell_N}$ is upper semi-homogeneous;

(b) ${\ell_N}=({\ell_N})_U$;

(c) ${\ell_N}=X_U$ for some Banach lattice $X$;

(d) there exists a constant $C\ge 1$ such that for all $a,b\in {\ell_N}$ it holds
$$
\|a\otimes b\|_{{\ell_N}}\le C\|a\|_{{\ell_N}}\|b\|_{{\ell_N}};$$

(e)  there exists a constant $C_1\ge 1$ such that for all $0<s,t\le 1$ we have
$$
N(st)\le C_1N(s)N(t).$$
\end{proposition}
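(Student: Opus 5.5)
We follow the scheme of the proof of Proposition \ref{Tsir7}, with all inequalities reversed. The cycle is $(a)\Rightarrow(b)\Rightarrow(c)\Rightarrow(d)\Rightarrow(e)\Rightarrow(a)$.

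Since $N$ satisfies the $\Delta_2$-condition at zero, $\ell_N$ is separable and has the Fatou property. Hence Theorem \ref{add cor1}(b) applies to $E=\ell_N$ and gives $(a)\Rightarrow(b)$. The implication $(b)\Rightarrow(c)$ is trivial: take $X=\ell_N$.

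For $(c)\Rightarrow(d)$, Theorem \ref{add cor1}(b) shows that $\ell_N$ is upper semi-homogeneous. Proposition \ref{Tsir5}(a) then yields $\|a\otimes b\|_{\ell_N}\le C\|a\|_{\ell_N}\|b\|_{\ell_N}$ directly, which is (d).

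For $(d)\Rightarrow(e)$, apply (d) to $a=\sum_{k=1}^n e_k$ and $b=\sum_{k=1}^m e_k$. This gives $\phi_{\ell_N}(mn)\le C\phi_{\ell_N}(m)\phi_{\ell_N}(n)$. By \eqref{fund Orl} this reads
$$N^{-1}\Big(\frac1n\Big)N^{-1}\Big(\frac1m\Big)\le C\,N^{-1}\Big(\frac1{mn}\Big).$$
Given $0<s,t\le1$, choose $n,m$ with $N(s)\asymp 1/n$ and $N(t)\asymp 1/m$, using $1/(n+1)<N(s)\le 1/n$. Then we have $s\le N^{-1}(1/n)$ and $t\le N^{-1}(1/m)$. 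The display gives $st\le C N^{-1}(1/(mn))$. Convexity of $N$ with $N(0)=0$ gives $N(x/C)\le N(x)/C$ for $C\ge1$, so
$$N(st)\le N\Big(C\,N^{-1}\Big(\frac1{mn}\Big)\Big)\le C'\,\frac1{mn}\le C''N(s)N(t).$$
The middle step uses iterated $\Delta_2$ to absorb the constant $C$. The last step uses $1/n\le 2N(s)$, which holds since $n\ge1$. This discretization, choosing $n,m$ and passing between integer and real arguments, is the only delicate point. It must also respect that $\Delta_2$ is only assumed near zero; this is handled by the fact that on $[\delta,1]$ all quantities are comparable.

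For $(e)\Rightarrow(a)$, we must show that $\|\sum_{k=1}^n a_ku_k\|_{\ell_N}\le C_1\|\sum_{k=1}^n a_ke_k\|_{\ell_N}$ for $\{u_k\}\in\mathfrak B_n(\ell_N)$. As before, write $u_k=\sum_{i\in A_k}b_ie_i$ with disjoint sets $A_k$, $b_i\ge0$, and $\sum_{i\in A_k}N(b_i)=1$, so that $b_i\le1$. By homogeneity, normalize $\|\sum a_ke_k\|_{\ell_N}=1$, so that $\sum_k N(a_k)\le1$ and each $a_k\le1$. Then (e) gives
$$\sum_k\sum_{i\in A_k}N(a_kb_i)\le C_1\sum_kN(a_k)\sum_{i\in A_k}N(b_i)\le C_1.$$
Since $C_1\ge1$, convexity gives $\sum N(a_kb_i/C_1)\le C_1^{-1}\sum N(a_kb_i)\le1$. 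Hence $\|\sum a_ku_k\|_{\ell_N}\le C_1$, which is \eqref{add2anew} and proves (a).
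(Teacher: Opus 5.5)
Your proof is correct and is the mirror image of the paper's proof of Proposition \ref{Tsir7}, which is the argument the paper invokes for this statement: Theorem \ref{add cor1}(b) for (a)$\Rightarrow$(b), the same theorem plus Proposition \ref{Tsir5}(a) for (c)$\Rightarrow$(d), the fundamental-function computation via \eqref{fund Orl} and $\Delta_2$ to reach (e), and the direct modular estimate for (e)$\Rightarrow$(a). Placing the fundamental-function step in (d)$\Rightarrow$(e) is cleaner than the paper's labeling, and the only blemishes are cosmetic: $\ell_N$ has the Fatou property without $\Delta_2$, and your convexity remark in (d)$\Rightarrow$(e) is never used.
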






It is known (see e.g. \cite{KMP97}) that the Orlicz space ${\ell_N}$ admits an upper $p$-estimate (resp. a lower $p$-estimate) if and only if there exists a constant $K>0$ such that for all $v\ge 1$ and $0<u\le vu\le 1$ we have
$$
N(v)u^p\le K\cdot N(vu)$$
(resp.
$$
N(vu)\le K\cdot N(v)u^p\;).$$
Assume that $N$ satisfies the $\Delta_2$-condition at zero. Then, from easy calculations it follows that the latter inequalities hold if and only if  
there exists a constant $K'>0$ such that for all $0<s,t\le 1$  
$$
N(st)\le K'\cdot N(s)t^p$$
(resp.
$$
N(s)t^p\le K'\cdot N(st)\;).$$
Hence, taking into account formulae \eqref{fund ind Orl}, we conclude that $\delta({\ell_N})=1/\nu_{{\ell_N}}$ and $\sigma({\ell_N})=1/\mu_{{\ell_N}}$. Consequently, applying Proposition \ref{coincidence with lp}(iii),(iv), we get the following result.

\begin{proposition}
\label{TsirOrl} 
Let $N$ be an Orlicz function satisfying the $\Delta_2$-condition at zero. The following conditions (a), (b) and (c) (resp. (a'), (b') and (c')) are equivalent:

(a) $({\ell_N})_U={\ell_p}$ for some $1\le p<\infty$;

(b) $({\ell_N})_U=\ell_{1/\nu_{{\ell_N}}}$;

(c) there exists a constant $K'>0$ such that for all $0<s,t\le 1$ we have
$$
N(st)\le K'\cdot N(s)t^{1/\nu_{{\ell_N}}};$$

(a') $({\ell_N})_L={\ell_p}$ for some $1\le p<\infty$;

(b') $(\ell_N)_L=\ell_{1/\mu_{\ell_N}}$;

(c') there exists a constant $K'>0$ such that for all $0<s,t\le 1$ we have
$$
N(s)t^{1/\mu_{{\ell_N}}}\le K'\cdot N(st).$$
 
\end{proposition}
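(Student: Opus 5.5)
The plan is to derive Proposition \ref{TsirOrl} from Proposition \ref{coincidence with lp}(iii),(iv), using the index identities $\delta(\ell_N)=1/\nu_{\ell_N}$ and $\sigma(\ell_N)=1/\mu_{\ell_N}$ obtained just before the statement. I treat the chain (a)$\Leftrightarrow$(b)$\Leftrightarrow$(c) in detail; the primed chain is symmetric, with (iv) in place of (iii) and lower estimates in place of upper ones.

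\emph{(b)$\Rightarrow$(a).} This is immediate once we know that $1/\nu_{\ell_N}\in[1,\infty)$. Since $0\le\mu_{\ell_N}\le\nu_{\ell_N}\le 1$, we only need $\nu_{\ell_N}>0$. Under the $\Delta_2$-condition at zero, $N(st)\ge c\,t^{r}N(s)$ for some $r<\infty$, and \eqref{fund ind Orl} then gives $\nu_{\ell_N}\ge 1/r>0$. In the primed chain the analogous requirement is $\mu_{\ell_N}>0$, which follows the same way from \eqref{fund ind Orl} and $\Delta_2$.

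\emph{(a)$\Rightarrow$(b).} Suppose $(\ell_N)_U=\ell_p$. By Proposition \ref{coincidence with lp}(iii), $p=\delta(\ell_N)=1/\nu_{\ell_N}$.

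\emph{(b)$\Rightarrow$(c).} Proposition \ref{coincidence with lp}(iii) shows that $\ell_N$ admits an upper $\delta(\ell_N)$-estimate. By the criterion from \cite{KMP97}, in the $\Delta_2$-reformulation stated before the proposition, this estimate holds with $p=\delta(\ell_N)=1/\nu_{\ell_N}$. That inequality is exactly (c).

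\emph{(c)$\Rightarrow$(b).} Condition (c) says, by the same reformulation, that $\ell_N$ satisfies an upper $p$-estimate with $p=1/\nu_{\ell_N}$. We need $p=\delta(\ell_N)$, which is the identity $\delta=1/\nu$. With that in hand, Proposition \ref{coincidence with lp}(iii) gives $(\ell_N)_U=\ell_p$.

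Two points need care. First, the ``easy calculation'' translating the \cite{KMP97} condition (with $v\ge1$, $0<u\le vu\le1$) into the two-variable form on $(0,1]^2$. Here one substitutes $s=vu$ and $t=1/v$, then handles the ranges using $\Delta_2$ and $N(1)=1$. Second, the exponents must match correctly: the upper estimate corresponds to $N(st)\le K' N(s)t^p$ and to $\nu$, while the lower estimate corresponds to $N(s)t^p\le K'N(st)$ and to $\mu$. I expect this bookkeeping against \eqref{fund ind Orl} to be the main obstacle. In particular, I would verify that the supremum in \eqref{fund ind Orl} is attained precisely when (c) holds, rather than only for exponents $q$ strictly on one side of $1/\nu_{\ell_N}$.
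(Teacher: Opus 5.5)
Your proposal is correct and follows the paper's own route. The paper's argument is the paragraph preceding the statement: the \cite{KMP97} criterion, rewritten under $\Delta_2$ as two-variable inequalities on $(0,1]^2$, is combined with \eqref{fund ind Orl} to get $\delta(\ell_N)=1/\nu_{\ell_N}$ and $\sigma(\ell_N)=1/\mu_{\ell_N}$, and Proposition \ref{coincidence with lp}(iii),(iv) then gives both chains. Your check that $\mu_{\ell_N},\nu_{\ell_N}>0$ under $\Delta_2$ (so that $p<\infty$ in (a), (a')) supplies a point the paper leaves implicit. Your closing worry about attainment is unnecessary: (c) is by definition the statement that the endpoint exponent $1/\nu_{\ell_N}$ is admissible, and the identity $\delta=1/\nu$ does not require it.
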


\begin{example}
Let $1<p<\infty$, $a\in\mathbb{R}$ and $N$ be an Otlicz function on $[0,\infty)$ such that $N(t)\asymp t^p\log^{a}(e/t)$ for $0<t\le 1$. 

One can easily check that $\mu_{{\ell_N}}=\nu_{{\ell_N}}=1/p$. Then, if $a\ge 0$, from Propositions  \ref{Tsir8} and  \ref{TsirOrl}, respectively, it follows that  conditions (e) and (c')  are fulfilled. Hence, $({\ell_N})_U={\ell_N}$ and $({\ell_N})_L={\ell_p}$. Similarly, in the case when $a\le 0$, conditions (e) and (c) from Propositions  \ref{Tsir7} and  \ref{TsirOrl}, respectively, hold. Therefore,  we have: $({\ell_N})_L={\ell_N}$ and $({\ell_N})_U=\ell_{p}$.

Finally, let $N(t)\asymp t\log^{a}(e/t)$, $0<t\le 1$, where $a\le 0$. Then, in the same way, applying Propositions  \ref{Tsir7} and  \ref{TsirOrl}, we get $({\ell_N})_L={\ell_N}$ and $({\ell_N})_U=\ell_{1}$.
\end{example}

\end{document}